\documentclass[11pt,oneside,reqno]{article}



\usepackage{amsfonts}
\usepackage{amsmath}
\usepackage{bbm}
\usepackage{nccmath}
\usepackage{graphicx}

\usepackage{amsmath,amsthm,amssymb}
\usepackage{mathrsfs}

 \newcommand{\PP}{{\mathbb P}}
\newcommand{\EE}{{\mathbb E}}

\theoremstyle{plain}

\newtheorem{theorem}{Theorem}[section]

\newtheorem{lem}[theorem]{Lemma} 

\theoremstyle{definition}

\newtheorem{remark}[theorem]{Remark}

\begin{document}

\title{\sc\bf\large\MakeUppercase{A Bound on the Rate of Convergence in the Central Limit Theorem for Renewal Processes under second moment conditions}}
\author{\sc
Gesine Reinert and Ce Yang \thanks{Oxford University; {\tt reinert@stats.ox.ac.uk}}}
\maketitle

\begin{abstract}
A famous result in renewal theory is the Central Limit Theorem for renewal processes. As  in applications usually only observations from a finite time interval are available, a bound on the Kolmogorov distance to the normal distribution is desirable. Here we provide an explicit non-uniform bound for the Renewal Central Limit Theorem based on Stein's method and track the explicit values of the constants. For this bound the inter-arrival time distribution is required to have only a second moment. As an intermediate result of independent interest we obtain explicit bounds in a non-central Berry-Ess\'{e}n theorem under second moment conditions. 
\end{abstract}

Keywords: {Rate of Convergence; Central Limit Theorem; Stein's Method} 

AMS Subject Classification: {60F05}{60G50} 

\section{Introduction} 
Let $Z, Z_i, i=1, 2, \ldots$ be i.i.d. non-negative random variables  with positive mean $\mu$ and finite variance $\sigma^2$, and let
$$X_t = \max \{ n: \sum_{i=1}^n Z_i \le t\}.$$
Then $(X_t, t \ge 0)$ is a classical renewal process.

 Renewal processes are a cornerstone in applied probability and appear in a number of applications, see for example \cite{omey} and references therein. As in applications, time is finite, a quantification of the the rate of convergence to normal is desirable. Also note that $X_t$ only takes on values in $\{ 0, 1, \ldots\}$. 
 In \cite{englund} it is shown that when $\gamma:= \EE ( | Z - \mu|^3) < \infty$ then 
 \begin{equation}\label{englund}
 \sup_{n =0, 1,\ldots} \left| \PP (X_t  < n) - \Phi \left( \frac{( n\mu - t) \sqrt{\mu}}{\sigma \sqrt{t}}\right) \right| \le 4 \left( \frac{\gamma}{\sigma} \right)^3  
 \left( \frac{\sqrt{\mu}}{\sqrt{t}} \right)^\frac12  
 \end{equation}
 where $\Phi$ is the c.d.f. of the standard normal distribution.  Also in  \cite{englund}  a similar bound is indicated when $Z$ possesses moments of order $\alpha$ for some $2 < \alpha < 3$. Under the third moment assumption, this bound was generalised to the bivariate case in \cite{ahmad}, which in turn was generalised to a $k$-variate process in \cite{niculescu}. The result was extended in \cite{roginsky} to allow for non-identically distributed inter-arrival times $Z_i$, again under third moment assumptions. 
 In \cite{billingsley}, Theorem 17.3, a functional central limit theorem for the renewal process is shown. In particular, as $t \rightarrow \infty$, $X_t$ is asymptotically normally distributed with mean $\frac{t}{\mu}$ and variance $\frac{\sigma^2 t }{ \mu^3}$. Hence second moments suffice for the normal approximation. Unfortunately  \cite{billingsley} does not give a bound on the rate of convergence. 
 
 In this paper we provide a bound on the rate of convergence in the case that $Z$ has only second moments; this bound is  of the order $t^{-\frac{1}{2}}$.   
As an intermediate result we provide explicit constants for a non-uniform Berry-Esse\'{e}n theorem, quantifying Theorem 2.2 in \cite{chenshao} (also Theorem 8.1 in \cite{ChGoSh}). Our main tool is Stein's method.
 
 The paper is organised as follows. In Section \ref{notations} we introduce notation, we give  some bounds on the tail of the normal distribution, and we provide some background from Stein's method. Section \ref{main} gives the main result, with a proof.
 The proof is based on the approach to obtain  non-uniform bound from sums of i.i.d. random variables in Chapter 8 of \cite{ChGoSh}, while deriving explicit bounds for the required intermediary results from that chapter.  Proofs of auxiliary results are given in Section \ref{proofs}. For convenience, in the Appendix we re-state results from \cite{ChGoSh} which are used in this paper.


\section{Notations, tail bounds, and results from Stein's method}\label{notations}

\subsection{Notations}

Let $Z_n$, $n\geq0$, be independent identically distributed, positive random variables. Let $T_n = Z_0 + ... + Z_{n-1}, n\geq1$. The process $X=(X_t, t\geq0)$  defined by $X_t = \#\left\{n \geq1 : T_n \leq t\right\}$ is the renewal process of interest.

For a renewal process $X_t$ whose inter-arrival times $Z_i$  have mean $\mu$ and variance $\sigma^2$, and $n, t \in \{0, 1, \ldots, \}$ fixed, we aim to compare $\PP( X_t \le n) = \PP\left( \frac{X_t - \frac{t}{\mu}}{\sigma \sqrt{t} \mu^{-\frac32}} \le \frac{(n \mu - t)\sqrt{\mu}}{\sigma \sqrt{t}} \right) $ to $\Phi\left(\frac{(n \mu - t)\sqrt{\mu}}{\sigma \sqrt{t}}\right)$. 

\subsection{Normal tail bounds}

The following results will be useful when we develop the bounds. Firstly, for
 every $w>0$, the standard normal tail bound
\begin{align}
\frac{1}{4(1+w^2)}e^{-\frac{w^2}{2}}\leq \Phi(-w)= 1-\Phi(w) \leq \min\left(\frac{1}{2},\frac{1} {w\sqrt{2\pi}} \right)e^{-\frac{w^2}{2}}  \label{4normaltail}    
\end{align}
holds.
This is a well-known result, see for example Inequality (2.11) and p.243 in \cite{ChGoSh}.  The next result assesses the smoothness of the standard normal c.d.f., as follows. 

\begin{lem}\label{lemma2.4}
For $\mu>0$, $\sigma>0$, $n \ge 1$ and $t>0$,  let
$$ I = 
 \left|  \Phi \left( \frac{n \mu -t }{\sigma \sqrt{n}} \right) - \Phi \left( \frac{( n\mu - t) \sqrt{\mu}}{\sigma \sqrt{t}}\right)\right|
$$
Then 
\begin{equation*}
I
\leq \begin{cases} 
\frac{\sqrt{2}}{e \sqrt{\pi}} \frac{\sigma}{\sqrt{t \mu }} 
& \mbox{if}\;  \,t \le n \mu;\\ 
\frac{16 }{ e^2 \sqrt{2  \pi } } \frac{{t^2}\sigma^3 }{{n^{\frac{1}{2}} \mu^2}{(t - n \mu)^2(\sqrt{n \mu t} + t)} } 
& \mbox{if}\,\,  t>  n \mu.
\end{cases} 
\end{equation*}
\end{lem}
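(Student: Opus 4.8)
The plan is to bound $I$ by the mean value theorem: since $\Phi$ has derivative $\varphi(x)=\frac{1}{\sqrt{2\pi}}e^{-x^2/2}$, we have
\[
I \le \left| \frac{n\mu-t}{\sigma\sqrt n} - \frac{(n\mu-t)\sqrt\mu}{\sigma\sqrt t} \right| \cdot \sup_{x} \varphi(x),
\]
where the supremum is over all $x$ between the two arguments. Writing the difference of arguments as
\[
\frac{|n\mu-t|}{\sigma}\left| \frac{1}{\sqrt n} - \frac{\sqrt\mu}{\sqrt t}\right|
= \frac{|n\mu-t|}{\sigma}\cdot\frac{|\sqrt t-\sqrt{n\mu}|}{\sqrt{nt}}
= \frac{|n\mu-t|}{\sigma}\cdot\frac{|t-n\mu|}{\sqrt{nt}\,(\sqrt t+\sqrt{n\mu})},
\]
we get the clean identity
\[
I \le \frac{(n\mu-t)^2}{\sigma\sqrt{nt}\,(\sqrt t+\sqrt{n\mu})}\cdot\sup_{x}\varphi(x).
\]
This already exposes the structure of the second branch's denominator (up to the factor $\sqrt t$ versus $\sqrt{n\mu t}+t=\sqrt t(\sqrt{n\mu}+\sqrt t)$, which matches after pulling out $\sqrt t$). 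The work then splits into the two cases according to whether the crude bound $\sup_x\varphi(x)\le\frac{1}{\sqrt{2\pi}}$ is good enough (case $t\le n\mu$) or whether one needs the Gaussian decay of $\varphi$ at the relevant argument (case $t>n\mu$).

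For the case $t \le n\mu$: here $n\mu - t \ge 0$, and I expect the simplest route is \emph{not} to use the identity above but to bound the difference of the two $\Phi$ values directly, because when $t$ is small the factor $(n\mu-t)^2$ is large and one wants cancellation against the Gaussian tail. Instead, note $\frac{n\mu-t}{\sigma\sqrt n}\ge\frac{(n\mu-t)\sqrt\mu}{\sigma\sqrt t}\ge 0$ (the first inequality because $t\le n\mu$ implies $\sqrt t\le\sqrt{n\mu}$, hence $\frac1{\sqrt n}\ge\frac{\sqrt\mu}{\sqrt t}$... wait, that's backwards), so one should order the arguments carefully; in any case both arguments are nonnegative, $\varphi$ is decreasing on $[0,\infty)$, and one can write $I=\int$ of $\varphi$ over an interval whose length is the displayed quantity and whose left endpoint is $a:=\frac{(n\mu-t)\sqrt\mu}{\sigma\sqrt t}$. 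Bounding $\varphi$ on that interval by $\varphi(a)\le\frac1{\sqrt{2\pi}}e^{-a^2/2}$ and then using $a^2 e^{-a^2/2}$-type maximization — specifically the elementary inequality $u e^{-u^2/2}\le e^{-1/2}$ for the factor $a$ times the Gaussian, combined with the remaining length factor — should produce exactly the bound $\frac{\sqrt2}{e\sqrt\pi}\frac{\sigma}{\sqrt{t\mu}}$. Concretely: $I\le \varphi(a)\cdot\frac{(n\mu-t)^2}{\sigma\sqrt{nt}(\sqrt t+\sqrt{n\mu})}$, and one rewrites $\frac{(n\mu-t)^2}{\sqrt{nt}(\sqrt t+\sqrt{n\mu})}$ in terms of $a$ and the target scale $\frac{\sigma}{\sqrt{t\mu}}$, then applies $x^k e^{-x^2/2}\le (k/e)^{k/2}$; with $k=2$ this gives the constant $2/e$, and after accounting for $\frac1{\sqrt{2\pi}}$ and the rearrangement one lands on $\frac{\sqrt2}{e\sqrt\pi}$. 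I would double-check the power bookkeeping here since it is the source of the exact constant.

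For the case $t > n\mu$: now $n\mu - t < 0$, both arguments are negative, and the smaller (more negative) one is $b:=\frac{(n\mu-t)\sqrt\mu}{\sigma\sqrt t}$ while... again one must check which is more negative. Using $\Phi(-w)=1-\Phi(w)$ and the upper normal tail bound \eqref{4normaltail}, $\Phi$ at a negative argument $-w$ is at most $\frac{1}{w\sqrt{2\pi}}e^{-w^2/2}$; differencing two such quantities and using that on $(-\infty,0]$ the density is increasing, the mean value form gives $I\le \varphi(\text{the argument closer to }0)\cdot(\text{length})$, but here the point is that the argument closer to zero is $\frac{(n\mu-t)\sqrt\mu}{\sigma\sqrt t}$ when $t>n\mu$ (since then $\sqrt\mu/\sqrt t<1/\sqrt n$... need to verify), so one evaluates $\varphi$ at $w=\frac{(t-n\mu)\sqrt\mu}{\sigma\sqrt t}$ and bounds $w^4 e^{-w^2/2}\le (4/e)^2$ to absorb the large $(t-n\mu)^2$ numerator, producing the $t^2\sigma^3/((t-n\mu)^2\cdots)$ shape with a constant built from $(4/e)^2$ and $\frac1{\sqrt{2\pi}}$, i.e.\ $\frac{16}{e^2\sqrt{2\pi}}$. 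The main obstacle I anticipate is precisely this: correctly identifying, in each case, which of the two arguments governs the Gaussian factor (so that one picks up the \emph{smallest} value of $\varphi$ available and the bound does not blow up), and then choosing the right power $k$ of the argument to cancel against the polynomial numerator so that the surviving constant matches $\frac{\sqrt2}{e\sqrt\pi}$ respectively $\frac{16}{e^2\sqrt{2\pi}}$. Everything else is algebraic rearrangement of $\frac{(n\mu-t)^2}{\sqrt{nt}(\sqrt t+\sqrt{n\mu})}$ into the stated denominators.
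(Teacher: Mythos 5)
Your overall strategy---bounding $I$ by the supremum of the normal density over the interval between the two arguments times the length of that interval, rewriting that length as $\frac{(n\mu-t)^2}{\sigma\sqrt{nt}(\sqrt t+\sqrt{n\mu})}$, and absorbing the polynomial factor via $\sup_{x\ge0} x^2e^{-x^2/2}=2/e$ resp.\ $\sup_{x\ge0} x^4e^{-x^2/2}=16/e^2$---is exactly the paper's. Your treatment of the case $t>n\mu$ is correct as written: there the endpoint closer to zero is indeed $\frac{(n\mu-t)\sqrt\mu}{\sigma\sqrt t}$, and since $\sqrt{nt}(\sqrt t+\sqrt{n\mu})=\sqrt n\,(t+\sqrt{n\mu t})$, the $w^4e^{-w^2/2}\le 16/e^2$ step produces precisely the stated bound.

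In the case $t\le n\mu$, however, you resolve the ordering question the wrong way, and this breaks the argument. Since $t\le n\mu$ implies $\frac{\sqrt\mu}{\sqrt t}\ge\frac1{\sqrt n}$, the point $a=\frac{(n\mu-t)\sqrt\mu}{\sigma\sqrt t}$ is the \emph{larger} (right) endpoint of the interval, hence the point where $\varphi$ is \emph{smallest}; the inequality $I\le\varphi(a)\cdot(\text{length})$ that you use is false in general. For example, with $\mu=\sigma=n=1$ and $t=0.01$ the interval is $[0.99,\,9.9]$, so $I=\Phi(9.9)-\Phi(0.99)\approx0.16$, while $\varphi(9.9)\times8.91\approx10^{-21}$. (You flagged that the ordering looked ``backwards'' but then kept the wrong endpoint.) The fix is to evaluate the density at the endpoint closer to zero, $u=\frac{n\mu-t}{\sigma\sqrt n}$: writing $(n\mu-t)^2=\sigma^2 n\,u^2$ gives
\begin{equation*}
I\le \frac{1}{\sqrt{2\pi}}\,u^2e^{-u^2/2}\,\frac{\sigma\sqrt n}{\sqrt t\,(\sqrt t+\sqrt{n\mu})}
\le \frac{2}{e\sqrt{2\pi}}\,\frac{\sigma\sqrt n}{t+\sqrt{n\mu t}}
\le \frac{\sqrt2}{e\sqrt\pi}\,\frac{\sigma}{\sqrt{t\mu}},
\end{equation*}
which is exactly the paper's Case 1; with this correction your proposal coincides with the published proof.
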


A proof of Lemma \ref{lemma2.4} is in Section \ref{proofs}. 

\subsection{Results from Stein's method}

Stein's method, origating from \cite{stein} is a powerful tool to assess distances between distributions.  The proof of the statements below can be found in \cite{ChGoSh}, pp.13--16.
Let $W$ be a random variable and suppose that the aim is to bound
$| \PP(W \le z) - \Phi (z)| $ for all real $z$. 
For fixed $z \in \mathbb{R}$, the unique bounded solution $f(w):=f_z(w)$ of the so-called {\it Stein equation}
\begin{equation}
    f'(w)-wf(w)=\mathbbm{1}({w \leq z}) -\Phi(z) \label{3steineqn}
\end{equation}
is given by 
\begin{equation}
    f_z(w) = \begin{cases}
\sqrt{2\pi}e^{w^2/2}\Phi(w)[1-\Phi(z)]  & \mbox{if} \;w \leq z;\\
\sqrt{2\pi}e^{w^2/2}\Phi(z)[1-\Phi(w)]  & \mbox{if} \;w>z.\\
           \end{cases} \label{3steinsoln}
\end{equation}
With this solution, 
$$ \PP(W \le z) - \Phi (z) = \EE \{ f'(W) - W f(W) \}$$
and the right-hand side depends only on the distribution of $W$ and can often be bounded using Taylor expansion. 
Moreover, for the solution $f_z$ of the Stein equation \eqref{3steineqn},  $wf_z(w)$ in an increasing function of $w$, and for all real $w$, 
\begin{align} 
|f'_z(w)| &\leq 1;  \label{3lemma43}\\
0 < f_z(w) &\leq \min\left(\frac{\sqrt{2\pi}}{4}, \frac{1}{|z|}\right). \label{3lemma45}
\end{align}

\section{A non-uniform bound for the Renewal Central Limit Theorem}\label{main}

Our main result  is Theorem \ref{theorem31}. As $\PP (X_t \le n) = 0$ for $n < 1$, we restrict attention to the regime that $n \ge 1$.

\begin{theorem}[\bf{Bound for the Renewal Central Limit Theorem Under Second Moment Assumptions}]\label{theorem31}
Let $X=(X_t, t\geq 0)$ be a renewal process whose inter-arrival times $Z_n$, $n\geq0$, have finite mean $\mu \in (0,\infty)$ and finite variance $\sigma^2 \in (0,\infty)$. 
Then for $n \ge 1$, 
\begin{eqnarray}
\lefteqn{\left| \PP (X_t  \le n) - \Phi \left( \frac{( n\mu - t) \sqrt{\mu}}{\sigma \sqrt{t}}\right) \right| } \nonumber \\
&\leq & \mathbbm{1}( t \le n \mu) \frac{\sqrt{2}}{e \sqrt{\pi}} \frac{\sigma}{\sqrt{t} \mu }   +  \mathbbm{1}( t >  n \mu) \frac{32 }{ e^2 \sqrt{2  \pi } } \frac{1}{\sqrt{t}
 } \left( \frac{\sigma^3  }{\mu^2\sqrt{t} } +  \frac{ \sigma }{ (224)^2 \sqrt{\mu}}\right)
\nonumber  \\
&& + \, 50,990   \left(1+\left|\frac{t- n\mu} {\sigma \sqrt{n}}\right|\right) ^{-2} .
 \label{55theorem7}
\end{eqnarray}
\end{theorem}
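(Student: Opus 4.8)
\emph{Proof strategy.} The plan is to rewrite the renewal event as a tail event of a sum of i.i.d.\ copies of $Z$, to bound that tail by a non-uniform Berry--Ess\'een estimate with explicit constants, and finally to pass from the ``partial-sum'' standardization to the ``renewal'' standardization $\tfrac{(n\mu-t)\sqrt\mu}{\sigma\sqrt t}$ via the smoothness estimate of Lemma~\ref{lemma2.4}. Since the $Z_i$ are positive the renewal times $T_k$ are strictly increasing, so for a suitable integer $k$ (here $k=n+1$) the event $\{X_t\le n\}$ coincides with $\{T_k>t\}=\{S_k>t\}$, where $S_k$ is a sum of $k$ i.i.d.\ copies of $Z$ with mean $k\mu$ and variance $k\sigma^2$. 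Putting $W=\tfrac{S_k-k\mu}{\sigma\sqrt k}$, $z=\tfrac{t-k\mu}{\sigma\sqrt k}$ and using $1-\Phi(z)=\Phi(-z)$ gives the exact identity
\begin{equation*}
\PP(X_t\le n)-\Phi\!\Big(\tfrac{(n\mu-t)\sqrt\mu}{\sigma\sqrt t}\Big)
=\Big(\Phi(-z)-\Phi\!\Big(\tfrac{(n\mu-t)\sqrt\mu}{\sigma\sqrt t}\Big)\Big)-\big(\PP(W\le z)-\Phi(z)\big),
\end{equation*}
and it suffices to bound the deterministic first term and the Berry--Ess\'een second term.

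\emph{Bounding the two terms.} For the deterministic term, $\Phi(-z)=\Phi\!\big(\tfrac{k\mu-t}{\sigma\sqrt k}\big)$, so Lemma~\ref{lemma2.4} (with $k$ in the role of $n$) bounds $\big|\Phi(-z)-\Phi\!\big(\tfrac{(k\mu-t)\sqrt\mu}{\sigma\sqrt t}\big)\big|$ by its two-case expression; on $\{t\le k\mu\}$ this yields the first indicator term of \eqref{55theorem7}, and on $\{t>k\mu\}$, after crude estimates ($\sqrt k\ge1$, and separating the regime where $t-k\mu$ is of order $t$ from the one where it is much smaller), the two summands inside the second indicator term, with the change of argument from $\tfrac{(k\mu-t)\sqrt\mu}{\sigma\sqrt t}$ to $\tfrac{(n\mu-t)\sqrt\mu}{\sigma\sqrt t}$ and the passage from $k$ to $n$ handled using that $\Phi$ is $\tfrac1{\sqrt{2\pi}}$-Lipschitz. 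For the Berry--Ess\'een term, I would invoke the explicit non-uniform Berry--Ess\'een bound proved as an auxiliary result of the paper (the quantitative version of Theorem~2.2 of \cite{chenshao}; cf.\ Theorem~8.1 of \cite{ChGoSh}) applied to the i.i.d., centred summands $\xi_i=\tfrac{Z_i-\mu}{\sigma\sqrt k}$ with $\sum_i\EE\xi_i^2=1$. Such a bound has the shape
\begin{equation*}
\big|\PP(W\le z)-\Phi(z)\big|\le\frac{C}{(1+|z|)^2}\Big(\sum_i\EE\big[\xi_i^2\mathbbm{1}(|\xi_i|>1+|z|)\big]+\frac{1}{1+|z|}\sum_i\EE\big[|\xi_i|^3\mathbbm{1}(|\xi_i|\le1+|z|)\big]\Big)
\end{equation*}
with an explicit $C$; since the first sum is $\le\sum_i\EE\xi_i^2=1$ while on $\{|\xi_i|\le1+|z|\}$ one has $|\xi_i|^3\le(1+|z|)\xi_i^2$, the bracket is $\le2$, so $|\PP(W\le z)-\Phi(z)|\le\tfrac{2C}{(1+|z|)^2}$; re-expressing $z$ in terms of $n$ then produces the term $50{,}990\,(1+|\tfrac{t-n\mu}{\sigma\sqrt n}|)^{-2}$. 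Summing the two bounds gives \eqref{55theorem7}.

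\emph{Main obstacle.} The reduction above is short; the substance is the auxiliary non-uniform Berry--Ess\'een bound \emph{with every constant tracked} under only a second moment. I would obtain it by re-running the Stein's-method argument of Chapter~8 of \cite{ChGoSh}: solve the Stein equation \eqref{3steineqn}--\eqref{3steinsoln}, use the bounds \eqref{3lemma43}--\eqref{3lemma45} on its solution and the monotonicity of $wf_z(w)$, use the normal tail estimates \eqref{4normaltail}, truncate the $\xi_i$ at level $1+|z|$, and combine these with a concentration estimate for the truncated sum. The delicate step is estimating the contribution of the truncated-away tails, where only the second moment is available — this is where the second-moment-only hypothesis is genuinely used — and it is from this computation, together with the case analysis of Lemma~\ref{lemma2.4} and the verification that all resulting pieces are dominated by the compact right-hand side of \eqref{55theorem7}, that the explicit numerical factors (such as the $224$ and the overall constant $50{,}990$) arise.
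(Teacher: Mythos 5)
Your overall architecture coincides with the paper's: rewrite the renewal probability as a partial-sum tail, split off the deterministic difference of two normal c.d.f.'s (bounded by Lemma \ref{lemma2.4}), and control the remaining probabilistic term by an explicit non-uniform Berry--Ess\'een estimate. The genuine gap is that the substance of the theorem is exactly that explicit estimate, and your proposal does not supply it: you invoke a Chen--Shao-type bound with truncation at level $1+|z|$ and an unspecified constant $C$, and defer its derivation to ``re-running Chapter 8 of \cite{ChGoSh}''. The paper's auxiliary result, Theorem \ref{theorem5.1}, has a different shape --- truncation at level $1$ plus the extra term $2\sum_i\PP\bigl(|\xi_i|>\tfrac{1\vee|z|}{4}\bigr)$, which is then removed by Markov's inequality and contributes $128(1+|z|)^{-2}$ --- and establishing it with tracked constants (Lemma \ref{lemma8.4explicit}, the $R_1,R_2,R_3$ decomposition, the constants $7115$ and $25431$, whence $50{,}990=128+2\cdot 25431$) is where essentially all of the work lies. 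As written, your ``main obstacle'' paragraph is a plan rather than a proof, so the numerical constant $50{,}990$ in \eqref{55theorem7} is not justified by your argument.

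Two concrete steps would also fail as described. First, the constant $224$ does not arise from the Berry--Ess\'een computation, contrary to your closing claim: it comes from the elementary observation that $50{,}990(1+|z|)^{-2}\ge 1$ unless $|t-n\mu|/(\sigma\sqrt n)>\sqrt{50{,}990}-1>224$, so the claimed bound is trivially true in the complementary regime; it is precisely this restriction to $|t-n\mu|/(\sigma\sqrt n)>224$ that tames the $(t-n\mu)^{-2}$ blow-up in the second case of Lemma \ref{lemma2.4} near $t\approx n\mu$ and turns it into the $\sigma/\bigl((224)^2\sqrt\mu\bigr)$ term. Your remark about ``separating the regime where $t-k\mu$ is of order $t$ from the one where it is much smaller'' does not provide this: in the latter regime the case-$2$ bound of Lemma \ref{lemma2.4} is not dominated by the second indicator term at all, and one must instead appeal to the triviality of the bound there. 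Second, your choice $k=n+1$ forces a passage from $k$ back to $n$, and the Lipschitz bound for $\Phi$ produces an additional error of order $\mu^{3/2}/(\sigma\sqrt{2\pi t})$ (besides the mismatch between $(1+|z_k|)^{-2}$ and $(1+|z_n|)^{-2}$), which the right-hand side of \eqref{55theorem7} cannot absorb when $\sigma/\mu$ is small; the paper avoids this issue entirely by working with $T_n$ throughout, via its convention $\PP(X_t\le n)=\PP(T_n\ge t)$ and the decomposition \eqref{term1}--\eqref{term2}.
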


Before we prove this results, here are some remarks. 

\begin{remark}
\begin{enumerate} 
\item 
The explicit value of the constant in Theorem $3.1$ is large. This is because the calculation of the constant is not optimized. As a result, the bound is not informative for small values of $n$.
\item
The bound is  the order of $t^{-\frac{1}{2}}$. The bound deteriorates for $t$ close to the expectation $n \mu$. 
\item 
Theorem $3.1$ does not assume the existence of the finite third moments. It holds as long as the inter-arrival times have finite variance. This result enables us to assess the rate of convergence in the Central Limit Theorem for example for a renewal process whose inter-arrival times $Z_i $ follow a Pareto $\mbox{Pareto} \;(n, \alpha)$-distribution
with $\alpha \in [2,3)$ for $i\geq 1$. 
\end{enumerate} 
\end{remark} 

For the proof of Theorem \ref{theorem31}, recall that $T_n = \sum_{i=1}^n Z_i$ has mean $n \mu $ and variance $n \sigma^2$, and $\PP(X_t \le n) = \PP (T_n \ge t)$. Moreover, the standardised $T_n$ satisfies the Central Limit Theorem. We decompose
\begin{eqnarray}
{\PP (X_t  \le n) - \Phi \left( \frac{( n\mu - t) \sqrt{\mu}}{\sigma \sqrt{t}}\right)}
 &=&  \PP (T_n \ge t) - \left\{ 1 - \Phi \left( \frac{t - n \mu }{\sigma \sqrt{n}} \right) \right\}
\label{term1} \\
 &&+   \Phi \left( \frac{n \mu -t }{\sigma \sqrt{n}} \right) - \Phi \left( \frac{( n\mu - t) \sqrt{\mu}}{\sigma \sqrt{t}}\right).
\label{term2}
\end{eqnarray} 

We bound the terms \eqref{term1} and \eqref{term2} separately.  For \eqref{term2} we employ the tail bounds for the normal distribution from Lemma \ref{lemma2.4}.  For \eqref{term1} we derive non-uniform bounds using ideas from Chapter 8 in \cite{ChGoSh} - our Theorem \ref{theorem5.1} is a version of Theorem 8.1 in \cite{ChGoSh} but with the constants in the bound made explicit. This bound is  of interest in their own right and hence we give it as a theorem.

\begin{theorem}\label{theorem5.1} Let $\xi_1, \xi_2, \ldots \xi_n$ be i.i.d. random variables with mean $\mu$ and variance $\sigma^2$. Let $W$ denote their sum, $W=\sum_{i=1}^n \xi_i$. Let 
\begin{equation*}
\beta_2 = \sum_{i=1}^n \EE \xi_i^2 \mathbbm{1}({|\xi_i|> 1})   \;\; \mbox{and} \;\; \beta_3 = \sum_{i=1}^n \EE |\xi_i|^3 \mathbbm{1}({|\xi_i|\leq 1}). 
\end{equation*}
Then, for all $z\in \mathbb{R}$, 
\begin{equation}
|\PP(W \leq z)-\Phi(z)|\leq 2\sum_{i=1}^n \PP\left(|\xi_i|>\frac{1\vee |z|}{4}\right) + C_2(1+|z|)^{-2}(\beta_2+\beta_3), \label{5theorem3}
\end{equation}
where
\begin{equation}
C_2\leq \begin{cases}
15    & if\; \beta_2+\beta_3\geq 1;\\
37      & if\;\beta_2+\beta_3<1 \;and\; |z|\leq 2;\\
25431     & if\; \beta_2+\beta_3<1 \;and\; |z|>2. 
\end{cases} \label{52C2}
\end{equation}
\end{theorem}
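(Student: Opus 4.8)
\textbf{Proof proposal for Theorem \ref{theorem5.1}.}

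The plan is to carry out Stein's method for the sum $W=\sum_{i=1}^n\xi_i$, but working with \emph{truncated} summands so that only second-moment information is needed. First I would introduce, for each $i$, the truncation $\bar\xi_i = \xi_i\mathbbm{1}(|\xi_i|\le 1)$ and set $\bar W = \sum_i \bar\xi_i$; since $\PP(W\neq \bar W)\le \sum_i \PP(|\xi_i|>1)$, and more carefully one localizes the truncation level to $(1\vee|z|)/4$ to capture the non-uniformity, the first term $2\sum_i\PP(|\xi_i|>(1\vee|z|)/4)$ in \eqref{5theorem3} will absorb the cost of passing from $W$ to the truncated sum and of the event that the relevant coordinate is large. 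This reduces matters to bounding $|\PP(\bar W\le z)-\Phi(z)|$ (after re-centering/re-scaling, whose effect on $\Phi$ is controlled by \eqref{4normaltail} and the Lipschitz property of $\Phi$) in terms of $\beta_2+\beta_3$ with an explicit $(1+|z|)^{-2}$ factor.

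Next I would apply the Stein equation \eqref{3steineqn} with its solution $f_z$ from \eqref{3steinsoln}, writing $\PP(\bar W\le z)-\Phi(z)=\EE\{f_z'(\bar W)-\bar W f_z(\bar W)\}$, and expand $\bar W f_z(\bar W)$ by the usual leave-one-out / concentration argument: with $\bar W^{(i)} = \bar W - \bar\xi_i$, use $\EE \bar\xi_i\,(\text{something}) $ together with $\EE f_z'(\bar W) = \sum_i \EE[\bar\xi_i^2 \int_0^1 f_z'(\bar W^{(i)} + t\bar\xi_i)\,dt]$-type identities to produce a remainder controlled by the second and third absolute moments of the truncated variables, i.e.\ by $\beta_2$ and $\beta_3$. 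The bounds $|f_z'|\le 1$ from \eqref{3lemma43} and $0<f_z(w)\le\min(\sqrt{2\pi}/4,1/|z|)$ from \eqref{3lemma45}, plus the monotonicity of $wf_z(w)$, are exactly what is needed to make every constant explicit; for the non-uniform $(1+|z|)^{-2}$ gain one splits according to whether $|z|\le 2$ or $|z|>2$ and, in the large-$|z|$ regime, uses the $1/|z|$ bound on $f_z$ together with the normal tail estimate \eqref{4normaltail} to extract two powers of $1/|z|$.

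Finally I would assemble the three cases of \eqref{52C2}. When $\beta_2+\beta_3\ge 1$ the factor $(1+|z|)^{-2}(\beta_2+\beta_3)$ is already of order at least $(1+|z|)^{-2}$, so only crude bounds (the global bounds on $f_z$ and $f_z'$, and $\PP(\bar W\le z)-\Phi(z)$ being at most a constant) are needed, giving the modest constant $15$. When $\beta_2+\beta_3<1$ and $|z|\le 2$ one runs the Stein expansion keeping track of constants but without needing the tail-decay refinement, yielding $37$. The delicate case is $\beta_2+\beta_3<1$ with $|z|>2$: here one must genuinely exploit the decay of $f_z$ and of the normal tail to gain the full $(1+|z|)^{-2}$, and this is where the large constant $25431$ is incurred; tracking the interplay between the truncation level $(1\vee|z|)/4$, the concentration inequalities for $\bar W$, and the tail bound \eqref{4normaltail} is the main obstacle, since several nested estimates each contribute multiplicative constants. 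Throughout, I would lean on the corresponding qualitative statement (Theorem 8.1 of \cite{ChGoSh}, restated in the Appendix) for the structure of the argument, the novelty being purely the explicit bookkeeping of constants.
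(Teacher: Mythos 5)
Your outline does follow the same general route as the paper (both are the explicit-constant version of Theorem 8.1 in \cite{ChGoSh}: truncate, run a leave-one-out Stein argument on the truncated sum, split into the three regimes), but as written it omits or misstates exactly the ingredients that produce the non-uniform factor $(1+|z|)^{-2}(\beta_2+\beta_3)$, and several steps would fail. First, the reduction from $W$ to the truncated sum cannot go through $\PP(W\neq\overline{W})\le\sum_i\PP(|\xi_i|>1)$: for $|z|>4$ this is not dominated by $2\sum_i\PP\left(|\xi_i|>\frac{1\vee|z|}{4}\right)$, and ``localizing the truncation level to $(1\vee|z|)/4$'' is not a fix, because it replaces $\beta_3$ by $\sum_i\EE|\xi_i|^3\mathbbm{1}(|\xi_i|\le(1\vee|z|)/4)$, which may exceed $\beta_2+\beta_3$ by a factor of order $|z|$. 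The truncation must stay at level $1$, and the passage is made via a bound on $\PP(W\ge z,\max_i\xi_i>1)$ (Lemma 8.3 of \cite{ChGoSh}), whose exponential-Chebyshev proof is what yields $2\sum_i\PP(|\xi_i|>z/4)$ plus an admissible $O((1+z)^{-2})\beta_2$ remainder. Similarly, in the case $\beta_2+\beta_3\ge1$ your ``crude bounds'' (the deviation being at most a constant) are not enough: the claim still requires $(1+|z|)^{-2}$ decay there, which the paper obtains from the concentration inequality of Lemma 8.1 in \cite{ChGoSh} (with $p=2$) together with the normal tail bound \eqref{4normaltail}; and for $|z|\le2$ the paper simply rescales the uniform second-moment bound $4.1(\beta_2+\beta_3)$ of (3.31) in \cite{ChGoSh}, whereas your plan to rederive it with constant at most $37/9$ is itself a nontrivial task you have not addressed.

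Second, in the decisive regime $\beta_2+\beta_3<1$, $z>2$, the decay is not extracted ``from the $1/|z|$ bound on $f_z$ together with the normal tail.'' Those estimates only control contributions on the event $\{\overline{W}^{(i)}\le z/2\}$; the dangerous contributions live on $\{\overline{W}^{(i)}>z/2\}$, where one needs exponential moment bounds $\EE e^{t\overline{W}^{(i)}}\le e^{e^t-1-t}$, and these rely on the one-sided truncation $\bar{x}_i=\xi_i\mathbbm{1}(\xi_i\le1)$ used in the paper, which guarantees $\bar{x}_i\le1$, $\EE\bar{x}_i\le0$ and $W\ge\overline{W}$. With your two-sided truncation $\xi_i\mathbbm{1}(|\xi_i|\le1)$ both the exponential-moment lemma and the sandwich $\PP(\overline{W}>z)\le\PP(W>z)$ break down. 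Moreover, the indicator part of the leave-one-out Stein identity (the term $R_{21}$ in the paper) is not controlled by $|f_z'|\le1$ or \eqref{3lemma45}; it requires the concentration inequality $\PP(a\le\overline{W}^{(i)}\le b)\le6(\min(1,b-a)+\beta_2+\beta_3)e^{-a/2}$ (Proposition 8.1 of \cite{ChGoSh}), and the term involving $wf_z(w)$ requires an explicit estimate of $(wf_z(w))'$ for $z>2$ (the paper's Lemma \ref{lemma8.4explicit}), neither of which appears in your plan. Until these are supplied, the claim $|\PP(\overline{W}\le z)-\Phi(z)|\le C(1+z)^{-2}(\beta_2+\beta_3)$ for $z>2$ is a genuine gap, not mere bookkeeping of constants.
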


The proof of Theorem \ref{theorem5.1} is found in Section \ref{proofs}. The proof of Theorem  \ref{theorem31} is now almost immediate.

\begin{proof}[Proof of Theorem  \ref{theorem31}]
First, Term \eqref{term2} is bounded directly in Lemma \ref{lemma2.4}.
The bound arising from \eqref{term1} is less than 1 only when 
$$\frac{| t - n \mu| }{\sigma \sqrt{n}} \ge \sqrt{50,990} -1.$$
Hence if $\frac{| t - n \mu| }{\sigma \sqrt{n}} \le 224$, the claim is trivially true. 
 So we apply Lemma \ref{lemma2.4} for $\frac{| t - n \mu| }{\sigma \sqrt{n}} > 224$,
 which turns the non-uniform bound for the regime $t > n \mu$ and $n \ge 1$ into a uniform bound for the regime $ \frac{ t - n \mu }{\sigma \sqrt{n}} > 224$, for which it holds that 
 $$\frac{t}{t - n \mu} \le 1 +  \frac{n \mu}{ 224 \sigma \sqrt{n}}$$
 so that 
 \begin{eqnarray*}
{\frac{16 }{ e^2 \sqrt{2  \pi } } \frac{t^2\sigma^3 }{\sqrt{n} \mu^2(t - n \mu)^2(\sqrt{n \mu t} + t)}  }
 & \le & \frac{32 }{ e^2 \sqrt{2  \pi } } \frac{1}{\sqrt{t
 } }\left( \frac{\sigma^3  }{\mu^2\sqrt{t} } +  \frac{ \sigma }{ (224)^2 \sqrt{\mu}}\right) .
 \end{eqnarray*} 
 This gives the first part of the bound.

   For Term \eqref{term1}, using Theorem \ref{theorem5.1} it remains to show that
   $$ 2\sum_{i=1}^n \PP\left(|\xi_i|>\frac{1\vee |z|}{4}\right) \le  128 \left(\frac{1}{1+|z|}\right)^2
   $$
   with $\xi = \frac{Z_i -  \mu}{\sigma \sqrt{n}}$ and then apply this inequalty to $z = \left| \frac{t - n \mu}{\sigma \sqrt{n}} \right| .$
Note that
$\frac{1+|z|}{2}\leq 1\vee |z|$. So, using Markov's inequality, 
\begin{align*}
\sum_{i=1}^n \PP\left(|\xi_i|>\frac{1\vee |z|}{4}\right) &\leq \sum_{i=1}^n \PP\left(|\xi_i|>\frac{1+|z|}{8}\right)\\
&\leq \left(\frac{8}{1+|z|}\right)^2 \sum_{i=1}^n \EE \xi_i^2 =  \left(\frac{8}{1+|z|}\right)^2 .
\end{align*}
Setting $C= 128 + 2 C_2$ gives the assertion. 
\end{proof} 

\begin{remark}
With the notation from Theorem \ref{theorem5.1}, under the same assumptions as for Theorem \ref{theorem31}, using Theorem 3.3 in \cite{chenshao2}  with $\xi_i =\frac{Z_i -  \mu}{\sigma \sqrt{n}} $ to bound \eqref{term2}  gives the  bound 
\begin{eqnarray*}
\lefteqn{\left| \PP (X_t  \le n) - \Phi \left( \frac{( n\mu - t) \sqrt{\mu}}{\sigma \sqrt{t}}\right) \right| } \nonumber \\
&\leq &  \frac{1}{\sqrt{t}} \max \left\{  \frac{\sqrt{2}}{e \sqrt{\pi}} \frac{\sigma}{\mu } ,   \frac{32 }{ e^2 \sqrt{2  \pi } }
  \left( \frac{\sigma^3  }{\mu^2\sqrt{t} } +  \frac{\sigma}{(224)^2 \sqrt{\mu}} \right) \right\} +  4 ( 4 \beta_2 + 3 \beta_3).
\end{eqnarray*} 
The terms $\beta_2 $ and $\beta_3$ depend on $n$ as well as on $t$ in an implicit fashion but may be straightforward to calculate in some situations. 
\end{remark}

\section{Remaining proofs of results}\label{proofs}

\begin{proof}[Proof of Lemma \ref{lemma2.4}]
To bound
$I = \left|  \Phi \left( \frac{n \mu -t }{\sigma \sqrt{n}} \right) - \Phi \left( \frac{( n\mu - t) \sqrt{\mu}}{\sigma \sqrt{t}}\right)\right| $
we consider two cases. 

{\bf Case 1: $n \mu \ge t$:}
If $t \le n \mu$ then $\frac{n \mu}{t} \ge 1$ and
\begin{eqnarray*}
I 
&\leq& \frac{1}{\sqrt{2 \pi} } \frac{n \mu -t}{\sigma \sqrt{n}}\left(  \frac{\sqrt{n \mu}}{\sqrt{t}} -1 \right)  \exp\left\{-\frac12 \left( \frac{n \mu -t}{\sigma \sqrt{n}}  \right)^2\right\}  \\
&\leq& \frac{1}{\sqrt{2 \pi} } \frac{\sigma \sqrt{n}}{ t + \sqrt{ t n \mu}} \sup_{x \ge 0} \left\{ x^2 e^{-\frac12 x^2} \right\} \\
&\leq&  \frac{\sqrt{2}}{e \sqrt{\pi } } \frac{\sigma }{  \sqrt{ t \mu}} .
\end{eqnarray*} 

\medskip 
{\bf Case 2: $t > n \mu $:}
If $t > n \mu$ then $\frac{n \mu}{t} < 1$ and
\begin{eqnarray*}
I 
&\le& \frac{1}{\sqrt{ 2 \pi}} \frac{t - n \mu }{\sigma \sqrt{n}}   \left( 1 - \frac{\sqrt{n \mu}}{ \sqrt{t}}\right) \exp\left\{-\frac12 \left(  \frac{( t - n\mu ) }{\sigma \sqrt{n}} \frac{\sqrt{n \mu}}{\sqrt{t}}  \right)^2 \right\} \\
&\le&\frac{1}{\sqrt{ 2 \pi}}
\frac{{t^2}\sigma^3 {n^{\frac{3}{2}}}}{{(n \mu)^2}{(t - n \mu)^2(\sqrt{n \mu t} + t)} }  
 \sup_{x \ge 0} \left\{ x^4 e^{-\frac12 x^2} \right\} \\
&\le & \frac{16 }{ e^2 \sqrt{2  \pi } } \frac{{t^2}\sigma^3 }{{n^{\frac{1}{2}} \mu^2}{(t - n \mu)^2(\sqrt{n \mu t} + t)} } .
\end{eqnarray*} 
This completes the proof.
\end{proof} 

\bigskip
{\bf{Proof of Theorem \ref{theorem5.1}}}

\medskip 
For the proof of Theorem \ref{theorem5.1} we first show an auxiliary result, Lemma \ref{lemma8.4explicit}, which  gives an explicit bound for Lemma 8.4 in \cite{ChGoSh}. 

Let $\xi_1,...,\xi_n$ denote independent random variables with zero means and variances summing to one. Let $W$ denote their sum, $W=\sum_{i=1}^n \xi_i$. We consider the truncated random variables and their sums
\begin{equation}
\Bar{x_i}=\xi_i\mathbbm{1}({\xi_i\leq 1}), \;\; \overline{W}=\sum_{i=1}^n \Bar{x_i}, \;\; \mbox{and} \;\; \overline{W}^{(i)}=\overline{W}-\Bar{x_i}. \label{2notation}
\end{equation}

\begin{lem}\label{lemma8.4explicit}
Let $f_z$ denote the solution to the Stein Equation \eqref{3steineqn}.
For $z>2$ and for all $s\leq t\leq 1$, we have
\begin{eqnarray} \lefteqn{\EE [(\overline{W}^{(i)}+\Bar{x_i}) f_z(\overline{W}^{(i)}+\Bar{x_i}) - (\overline{W}^{(i)}+t)f_z(\overline{W}^{(i)}+t)]} \nonumber \\
&\leq & \left(25.8+\frac{20e^{e^2-2}}{\sqrt{2\pi}}\right) e^{-\frac{z}{2}}\min(1,|s|+|t|). \label{51lem5}
\end{eqnarray}
\end{lem}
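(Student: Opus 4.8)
The plan is to bound the difference
$$\EE[(\overline{W}^{(i)}+\bar{x_i})f_z(\overline{W}^{(i)}+\bar{x_i}) - (\overline{W}^{(i)}+t)f_z(\overline{W}^{(i)}+t)]$$
by writing it as an integral of the derivative of the map $u\mapsto (\overline{W}^{(i)}+u)f_z(\overline{W}^{(i)}+u)$ between $u=\bar{x_i}$ and $u=t$, and controlling the integrand separately on the region where the argument $\overline{W}^{(i)}+u$ is small (say, less than roughly $z/2$, where the Stein solution and its weighted derivative are genuinely bounded) and the region where it is large (where we exploit the exponential decay $e^{-z/2}$ hidden in $f_z$ through the factor $1-\Phi(z)$ in \eqref{3steinsoln}). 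Since $wf_z(w)$ is increasing in $w$ and $\bar{x_i}\le 1$, $t\le 1$, the increments under consideration are one-sided and of size at most $|s|+|t|$ (and trivially at most a constant, giving the $\min$), so the first step is to reduce to estimating the increment of $wf_z(w)$ over an interval of length $\le |s|+|t|$ near $\overline{W}^{(i)}$.

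Next I would use the explicit formula \eqref{3steinsoln} together with the standard normal tail bounds \eqref{4normaltail} to extract the $e^{-z/2}$ factor. For arguments $w\le z$ one has $f_z(w)=\sqrt{2\pi}e^{w^2/2}\Phi(w)(1-\Phi(z))$, so $f_z$ — and hence $(wf_z(w))'=f_z(w)+wf_z'(w)=f_z(w)+w\,(wf_z(w)-\mathbbm 1(w\le z)+\Phi(z))$ via the Stein equation \eqref{3steineqn} — carries a factor $1-\Phi(z)\le \frac{1}{z\sqrt{2\pi}}e^{-z^2/2}\le \frac{1}{\sqrt{2\pi}}e^{-z/2}$ for $z>2$, times a polynomial-in-$w$ times Gaussian factor that is bounded on $w\le z$ but blows up polynomially; here one truncates at a level comparable to $z$. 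For $w>z$, $f_z(w)=\sqrt{2\pi}e^{w^2/2}\Phi(z)(1-\Phi(w))$ is itself bounded by $\frac{1}{w}\le \frac1z$ by \eqref{3lemma45}, and again one pays a small price. The two explicit constants $25.8$ and $\frac{20e^{e^2-2}}{\sqrt{2\pi}}$ in \eqref{51lem5} should emerge as: the first from the contribution of the bounded pieces (using $|f_z'|\le 1$ from \eqref{3lemma43} and $0<f_z\le\sqrt{2\pi}/4$ from \eqref{3lemma45}, combined with the $e^{-z/2}$ factor already present and the length bound $|s|+|t|$), and the second, with its conspicuous $e^{e^2}$, from the region where the Gaussian density is evaluated near the truncation level $\asymp e^2$ or $2$, i.e.\ from bounding $\sup_{w} w^k e^{w^2/2-z w/?}$-type quantities or from the worst case in splitting at $w=e^2$.

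The main obstacle is the bookkeeping around the Stein solution near $w=z$: $f_z$ is continuous but $f_z'$ has a jump at $w=z$ (from the indicator in \eqref{3steineqn}), and the product $wf_z(w)$, while monotone, has a derivative that grows; one must choose the splitting threshold (this is where the $e^2$ enters) so that on $\{w\le z\}$ the factor $e^{w^2/2}(1-\Phi(z))$ stays controlled — roughly, replace $1-\Phi(z)$ by $ce^{-z^2/2}/z$, pair $e^{w^2/2-z^2/2}$ with the remaining room, and optimise. Care is also needed because $\overline{W}^{(i)}+u$ ranges over all of $\RR$ as $\overline{W}^{(i)}$ does, so the bound must be uniform in the value of $\overline{W}^{(i)}$; this is precisely why we bound $(wf_z(w))'$ pointwise rather than taking expectations first, and why the $\min(1,\cdot)$ truncation is available for free. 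Finally I would reconcile the $e^{-z/2}$ in the statement (as opposed to $e^{-z^2/2}$) — this weaker decay is deliberately chosen because it must survive a later substitution where $z$ is itself of order $\sqrt n$, and it is obtained simply from $e^{-z^2/2}\le e^{-z/2}$ for $z\ge 1$, at the cost of the large absolute constants that the remark after Theorem \ref{theorem31} acknowledges.
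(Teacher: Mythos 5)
Your skeleton matches the paper's: write the difference as $\int \EE\, g(\overline{W}^{(i)}+u)\,du$ with $g(w)=(wf_z(w))'$, split according to whether the argument is below or above $z/2$, and use the explicit form \eqref{3steinsoln} with the normal tail bounds \eqref{4normaltail}. But there is a genuine gap in how you handle the region $w>\frac{z}{2}$. You propose to bound $g$ \emph{pointwise, uniformly in the value of} $\overline{W}^{(i)}$, arguing that this is "precisely why we bound $(wf_z(w))'$ pointwise rather than taking expectations first." That cannot deliver the claimed $e^{-z/2}$ factor: for $\frac{z}{2}<w\le z$ one has $g(w)=\sqrt{2\pi}(1-\Phi(z))\bigl((1+w^2)e^{w^2/2}\Phi(w)+\tfrac{w}{\sqrt{2\pi}}\bigr)$, and at $w$ near $z$ this is of order $(1+z^2)e^{z^2/2}(1-\Phi(z))\asymp z$, i.e.\ the best uniform bound on $g$ over $w>\frac{z}{2}$ \emph{grows} with $z$ rather than decaying. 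The paper's proof keeps the expectation and pays for this large pointwise bound with the smallness of the event $\{\overline{W}^{(i)}+u>\frac{z}{2}\}$: by Markov's inequality $\PP(\overline{W}^{(i)}+u>\frac z2)\le e^{2u-z}\,\EE[e^{2\overline{W}^{(i)}}]$, and the exponential moment is controlled by the Bennett--Hoeffding type Lemma 8.2 of \cite{ChGoSh} (with $t=2$, $\alpha=B=1$, using $\Bar{x_i}\le 1$, $\EE\Bar{x_i}\le 0$, variance sum $\le 1$), giving $\EE[e^{2\overline{W}^{(i)}}]\le e^{e^2-3}$. Note also that a cheap substitute such as Chebyshev ($\PP(\overline{W}^{(i)}+u>\frac z2)\lesssim z^{-2}$) would only give an $O(1/z)$ contribution, not $e^{-z/2}$, so the exponential-moment input is genuinely needed.

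Relatedly, your attribution of the constant $\frac{20e^{e^2-2}}{\sqrt{2\pi}}$ to "splitting at $w=e^2$" or a supremum of $w^k e^{w^2/2-zw}$-type quantities is incorrect: the $e^{e^2-2}$ is exactly the exponential moment bound $e^{e^2-3}$ times $e^{2u}\le e^{2}$ (with leftover factors absorbed into $e^{-z/2}$ for $z>2$). Your final remark that the decay $e^{-z/2}$ comes "simply from $e^{-z^2/2}\le e^{-z/2}$" is only correct for the region $w\le \frac z2$ (where $e^{z^2/8}(1-\Phi(z))\lesssim e^{-3z^2/8}$ does the work); in the region $w>\frac z2$ the decay comes from the Markov/exponential-moment step, which is the missing ingredient in your proposal.
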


\begin{proof}[Proof of Lemma \ref{lemma8.4explicit}]
Let $g(w)=(wf_z(w))'$. Then for all $s\leq t\leq 1$,
\begin{equation}
\EE (\overline{W}^{(i)}+t) f_z(\overline{W}^{(i)}+t)-(\overline{W}^{(i)}-s) f_z(\overline{W}^{(i)}-s)] = \int_s^t \EE g(\overline{W}^{(i)}+u)du. \label{5lemma9proof1}
\end{equation}
Using \eqref{3steinsoln}, we can compute that
\begin{equation}
g(w) = \begin{cases}
 \sqrt{2\pi}(1-\Phi(z))((1+w^2)e^{w^2/2}\Phi(w)+\frac{w}{\sqrt{2\pi}})  & \mbox{if} \;w \leq z;\\
\sqrt{2\pi}\Phi(z)((1+w^2)e^{w^2/2}(1-\Phi(w))-\frac{w}
{\sqrt{2\pi}}) & \mbox{if} \;w>z.\\
       \end{cases} \label{5gw0}
\end{equation}
Instead of $w\leq z$, we consider whether or not $w\leq \frac{z}{2}$. We split the problem into four cases.

Case $1$. If $w\leq 0$, then $(5.4)$ from \cite{chenshao}  gives
\begin{equation}
\sqrt{2\pi}(1+w^2)e^{w^2/2}\Phi(w)+w \leq \frac{2}{1+|w|^3} \;\;\; \mbox{for}\; w\leq 0. \label{5ChenShao2011} 
\end{equation}
In this case, $w\leq 0<z$, so
\begin{equation}
g(w)\leq (1-\Phi(z))\frac{2}{1+|w|^3}\leq \frac{4(1+z^2)(1+z^3)}{1+|w|^3} e^{\frac{z^2}{8}}(1-\Phi(z)). \label{5gw1}
\end{equation}

Case $2$. If $0<w\leq \frac{z}{2}$, then
\begin{align}
g(w)&\leq (1-\Phi(z))(3(1+z^2)e^{\frac{z^2}{8}}+z) \nonumber\\
&\leq \frac{4(1+z^2)(1+z^3)}{1+|w|^3} e^{\frac{z^2}{8}}(1-\Phi(z)).\label{5gw2}
\end{align}

Case $3$. If $\frac{z}{2}<w\leq z$, then 
\begin{align}
g(w)&\leq \sqrt{2\pi}(1-\Phi(z))((1+z^2)e^{z^2/2}+\frac{z}{\sqrt{2\pi}})\nonumber\\ 
&\leq 8(1+z^2)e^{z^2/2}(1-\Phi(z)). \label{5gw3}
\end{align}

Case $4$. If $z<w$, then  replacing $w$ by $-w$ in \eqref{5ChenShao2011} gives  $$\sqrt{2\pi}(1+w^2)e^{w^2/2}\Phi(-w)-w \leq \frac{2}{1+|w|^3}.$$
In this case, we use the standard normal tail bound \eqref{4normaltail} to obtain
\begin{align}
g(w)&\leq \Phi(z)\frac{2}{1+|w|^3} \leq 2= 8(1+z^2)e^{\frac{z^2}{2}} \frac{e^{-z^2/2}}{4(1+z^2)} \leq 8(1+z^2)e^{\frac{z^2}{2}}(1-\Phi(z)).\label{5gw4}
\end{align}

Collecting  \eqref{5gw1}, \eqref{5gw2}, \eqref{5gw3} and \eqref{5gw4}, 
\begin{equation}
g(w) \leq  \begin{cases}
\frac{4(1+z^2)(1+z^3)}{1+|w|^3} e^{\frac{z^2}{8}}(1-\Phi(z)) & \mbox{if}\; w\leq \frac{z}{2};\\
8(1+z^2)e^{\frac{z^2}{2}}(1-\Phi(z))                         & \mbox{if}\; w>\frac{z}{2}.\\
           \end{cases}    \label{5gw5}
\end{equation}

So for any $u \in [s,t]$, since $z>2$, we have
\begin{align*}
\EE g(\overline{W}^{(i)}+u)&= \EE\left[g(\overline{W}^{(i)}+u)\mathbbm{1}_{\overline{W}^{(i)}+u\leq \frac{z}{2}}\right] + \EE\left[g(\overline{W}^{(i)}+u)\mathbbm{1}_{\overline{W}^{(i)}+u> \frac{z}{2}}\right] \\
&\leq \EE\left[\frac{1}{1+|\overline{W}^{(i)}+u|^3}\right]4(1+z^2)(1+z^3)e^{\frac{z^2}{8}}(1-\Phi(z))\\ 
&\;\;\;\;\;+ 8(1+z^2)e^{\frac{z^2}{2}}(1-\Phi(z))\PP\left(\overline{W}^{(i)}+u>\frac{z}{2}\right)
\\
&\leq \EE \left[\frac{1}{1+|\overline{W}^{(i)}+u|^3}\right]4(1+z^2)(1+z^3)e^{\frac{z^2}{8}} \frac{1}{z\sqrt{2\pi}}e^{-\frac{z^2}{2}} \\
&\;\;\;\;\;+ 8(1+z^2)e^{\frac{z^2}{2}} \frac{1}{z\sqrt{2\pi}} e^{-\frac{z^2}{2}} \PP(e^{2u}e^{2\overline{W}^{(i)}}>e^z).
\end{align*}

Using Markov's Inequality, since $u\leq t\leq 1$, we obtain
\begin{align}
\EE g(\overline{W}^{(i)}+u)&\leq \frac{4(1+z^2)(1+z^3)e^{-\frac{3z^2}{8}}}{z\sqrt{2\pi}} \EE \left[\frac{1}{1+|\overline{W}^{(i)}+u|^3}\right] \nonumber \\
&+ \frac{8(1+z^2)}{z\sqrt{2\pi}}e^{2u-z} \EE [e^{2\overline{W}^{(i)}}] \nonumber\\
&\leq \frac{4(1+z^2)(1+z^3)e^{-\frac{3z^2}{8}}}{z\sqrt{2\pi}} + \frac{8(1+z^2)}{z\sqrt{2\pi}}e^2 e^{-z} e^{e^2-3}\nonumber \\
&\leq \left(25.8+\frac{20}{\sqrt{2\pi}}e^{e^2-2}\right) e^{-\frac{z}{2}}, \label{5lemma9proof2}
\end{align}
where we used Lemma 8.2 from \cite{ChGoSh} with $t=2$ and $\alpha=B=1$. 
So for $z>2$, from \eqref{5lemma9proof2} we have
\begin{align}
\int_s^t \EE g(\overline{W}^{(i)}+u)du &\leq \left(25.8+\frac{20e}{\sqrt{2\pi}}e^{e^2-3}\right) e^{-\frac{z}{2}}(t-s) \nonumber\\
&\leq \left(25.8+\frac{20e^{e^2-2}}{\sqrt{2\pi}}\right) e^{-\frac{z}{2}} (|t|+|s|). \label{5lemma9proof3}
\end{align}

The assertion follows. 
\end{proof}


\begin{proof}[Proof of Theorem \ref{theorem5.1}]
Note that it is enough to consider $z\geq 0$. To see this, replacing $W$ by $-W$ gives
\begin{equation}
|\PP(-W\leq z)-\Phi(z)|=|\PP(-W\geq z)-\Phi(-z)|=|\PP(W\leq -z)-\Phi(-z)| .   \label{5z<0}
\end{equation}

{\bf The case $\beta_2+\beta_3\geq 1$} 

We start with the case of $\beta_2+\beta_3\geq 1$. Note that 
$$ |\PP( W \le z) - \Phi (z) | = | \PP (W > z) - ( 1 - \Phi(z))| \le \PP(W>z) + 1-\Phi(z).$$

As $W$ is sum of independent random variables with zero means and variances less than or equal to one, we apply Lemma 8.1 in \cite{ChGoSh} with $B=1$ and $p=2$ to obtain
\begin{align}
\PP(W\geq z)&\leq \PP\left(\max_{1\leq i\leq n}|\xi_i|> \frac{z\vee 1}{2}\right) + e^2\left(1+\frac{z^2}{2}\right)^{-2}\nonumber\\
&\leq \sum_{i=1}^n \PP\left(|\xi_i|>\frac{z\vee 1}{4}\right) + e^2\left(1+\frac{z^2}{2}\right)^{-2} \label{5beta>1aim}.
\end{align}
To write \eqref{5beta>1aim} as a bound of the form \eqref{5theorem3}, 
we bound $e^2(1+\frac{z^2}{2})^{-2}$ by $1.867e^2(1+z)^{-2}$. 


For $1-\Phi(z)$ we apply the standard normal tail bound \eqref{4normaltail} 
and obtain
\begin{align}
|\PP(W\leq z)-\Phi(z)|
&\leq \PP(W\geq z) +|1-\Phi(z)|\nonumber\\
&\leq \sum_{i=1}^n \PP\left(|\xi_i|>\frac{z\vee 1}{4}\right) + 1.867e^2(1+z)^{-2}(\beta_2+\beta_3) \nonumber\\
&\;\;\;\; + \min\left(\frac{1}{2},\frac{1}{z\sqrt{2\pi}}\right)e^{-\frac{z^2}{2}}. \label{51.8662}
\end{align}

Now we bound the standard normal tail bound in \eqref{51.8662} by \begin{equation}
\min\left(\frac{1}{2},\frac{1}{z\sqrt{2\pi}}\right)e^{-\frac{z^2}{2}} \leq 1.176(1+z)^{-2}.   \label{51.176}
\end{equation}

Substituting \eqref{51.176} into \eqref{51.8662} gives that for $z\geq 0$,
\begin{align}
\lefteqn{|\PP(W\leq z)-\Phi(z)|} \nonumber\\
&\leq 
\sum_{i=1}^n \PP\left(|\xi_i|>\frac{z\vee 1}{4}\right) + (1.867e^2+1.176)(1+z)^{-2}(\beta_2+\beta_3)\nonumber\\
&\leq 2\sum_{i=1}^n \PP\left(|\xi_i|>\frac{1\vee |z|}{4}\right) + (1.867e^2+1.176)(1+|z|)^{-2}(\beta_2+\beta_3). \label{5nonunifboundbeta>1}   
\end{align}
Since $1.867e^2+1.176< 15$, we have proved the theorem for the case that  $\beta_2+\beta_3 \geq 1$.

\medskip 
{\bf{The case $\beta_2+\beta_3 <1$ and $z \le 2$}} 

Next, we consider the case of $\beta_2+\beta_3 <1$. We distinguish whether or not $z>2$.

If $z\in [0, 2]$, then we use the uniform bound $(3.31)$ from \cite{ChGoSh}, which states that 
\begin{equation}
   \sup_{z\in \mathbb{R}} |\PP(W\leq z)-\Phi(z)| \leq 4.1(\beta_2+\beta_3).\label{54.1}
\end{equation}
We bound $4.1$ by $37(1+|z|)^{-2}$ for $z\in [0, 2]$ because $4.1\times (1+2)^2 <37$. So we have 
\begin{equation}
|\PP(W\leq z)-\Phi(z)|\leq 2\sum_{i=1}^n \PP\left(|\xi_i|>\frac{1\vee |z|}{4}\right) + 37(1+|z|)^{-2}(\beta_2+\beta_3).   \label{536.9}
\end{equation}
Thus we have proved the theorem when $\beta_2+\beta_3 <1$ and $z\in[0,2]$.

\medskip
{\bf{The case $\beta_2+\beta_3 <1$ and $z > 2$}} 

Our remaining task is to prove the theorem when $\beta_2+\beta_3 <1$ and $z>2$.
Recall the notations  $\Bar{x_i}=\xi_i\mathbbm{1}_{\xi_i\leq 1}$, $\overline{W}=\sum_{i=1}^n \Bar{x_i}$, and $\overline{W}^{(i)}=\overline{W}-\Bar{x_i}$.
The idea is to show that $\PP(W>z)$ is close to $\PP(\overline{W}>z)$ for $z>2$. Observing that 
\begin{align}
\{W>z\}&=\{W>z, \max_{1\leq i\leq n}\xi_i >1\}\cup \{W>z, \max_{1\leq i\leq n}\xi_i \leq 1\} \nonumber\\
       &\subset \{W>z, \max_{1\leq i\leq n}\xi_i >1\}\cup\{\overline{W}>z\},\label{5Wset}
\end{align}
and $W\geq \overline{W}$, $\PP(\overline{W}>z)$ yields 
\begin{equation}
\PP(\overline{W}>z)\leq \PP(W>z)\leq \PP(\overline{W}>z)+\PP(W>z, \max_{1\leq i\leq n}\xi_i >1).  \label{5Wset2} 
\end{equation}
From Lemma $8.3$ in \cite{ChGoSh}, with $p=2$ and $z>2$,
\begin{equation}
\PP(W \geq z, \max_{1\leq i\leq n}\xi_i >1) \leq 2\sum_{i=1}^n \PP\left(|\xi_i|>\frac{z}{4}\right) + e^2\left(1+\frac{z^2}{8}\right)^{-2}\beta_2. \label{5Wset3}
\end{equation}
For a bound of type \eqref{5theorem3}, we bound $(1+\frac{z^2}{8})^{-2}$ by $4(1+z)^{-2}$. 
Thus from \eqref{5Wset2} and \eqref{5Wset3}, 
\begin{align*}
|\PP(W\geq z)-P(\overline{W}>z)| &\leq  2\sum_{i=1}^n \PP\left(|\xi_i|>\frac{z}{4}\right) + e^2\left(1+\frac{z^2}{8}\right)^{-2}\beta_2\\
&\leq  2\sum_{i=1}^n \PP\left(|\xi_i|>\frac{z}{4}\right) + 4e^2(1+z)^{-2} (\beta_2+\beta_3),
\end{align*}
where for the last inequality we used that$\beta_3 \ge 0$. 
Hence, using the triangle inequality, we have for $z>2$,
\begin{align*}
\lefteqn{
|\PP(W\leq z)-\Phi(z)|}\\
 &\leq |\PP(W\geq z)-\PP(\overline{W}>z)|+ |\PP(\overline{W}>z)-\Phi(-z)|  \\
 &\leq 2\sum_{i=1}^n P\left(|\xi_i|>\frac{z}{4}\right) + 4e^2(1+z)^{-2} (\beta_2+\beta_3)
 + |\PP(\overline{W}\leq z)-\Phi(z)|.
\end{align*}
Note that for $z>2$, we can bound $e^{-\frac{z}{2}} \le \frac{16}{e^{1.5}}(1+z)^{-2}$. 

Now we claim that for $z>2$,
\begin{equation}
    |\PP(\overline{W}\leq z)-\Phi(z)|\leq 7115 e^{-\frac{z}{2}}(\beta_2+\beta_3)\label{5e-z/2}.
\end{equation}
If \eqref{5e-z/2} holds, then for $z>2$, bounding $e^{-\frac{z}{2}} \le \frac{16}{e^{1.5}}(1+z)^{-2}$, we obtain 
\begin{align}
\lefteqn{|\PP(W\leq z)-\Phi(z)| } \nonumber \\ &\leq 
2\sum_{i=1}^n \PP\left(|\xi_i|>\frac{z}{4}\right)+ \left(4e^2+\frac{16}{e^{1.5}}\times7115\right)(1+z)^{-2}(\beta_2+\beta_3)   \nonumber \\
&\leq
2\sum_{i=1}^n \PP\left(|\xi_i|>\frac{1\vee |z|}{4}\right)+ 25431(1+|z|)^{-2}(\beta_2+\beta_3) \label{5z>2bound}
\end{align}
which proves the theorem when $\beta_2+\beta_3<1$ and $z>2$ and therefore completes the proof of Theorem $5.1$. 
So our remaining work is to prove \eqref{5e-z/2}.

\medskip
{\bf Proof of  \eqref{5e-z/2}}

To prove  \eqref{5e-z/2} we use Stein's method as well as properties of the solution $f_z$ to the Stein Equation \eqref{3steineqn}. We define the function 
\begin{equation}
\Bar{K_i}(t)=\EE[\Bar{x_i}(\mathbbm{1}_{0 \leq t \leq \Bar{x_i}}- \mathbbm{1}_{\Bar{x_i} \leq t <0})] \label{5kfunction}
\end{equation}
where $\Bar{x_i}=\xi_i \mathbbm{1}_{\xi_i\leq 1}$.
Equation (8.24) in \cite{ChGoSh} and
$\sum_{i=1}^n \EE{\xi_i}^2=1$ give 
\begin{equation}
\sum_{i=1}^n \int_{-\infty}^1 \Bar{K_i}(t) dt= \sum_{i=1}^n \EE\Bar{x_i}^2 = 1- \sum_{i=1}^n \EE[{\xi_i}^2 \mathbbm{1}_{\xi_i>1}].   \label{5kfunction2}
\end{equation}

Using the independence between $\overline{W}^{(i)}$ and $\Bar{x_i}$, 
\begin{align}
\EE\left[\overline{W}f_z(\overline{W})\right] &= \sum_{i=1}^n \EE\left[\Bar{x_i}f_z(\overline{W})\right]\nonumber\\
&=\sum_{i=1}^n \EE[\Bar{x_i}(f_z(\overline{W})-f_z(\overline{W}^{(i)}) )]+ \sum_{i=1}^n 
\EE\Bar{x_i} \EE[f_z(\overline{W}^{(i)})]\nonumber\\
&=\sum_{i=1}^n \EE\left[\Bar{x_i} \int_0^{\Bar{x_i}} f'_z(\overline{W}^{(i)}+t) dt\right] + \sum_{i=1}^n \EE\Bar{x_i} \EE[ f_z(\overline{W}^{(i)})] \label{52p33}.
\end{align}
The first term in \eqref{52p33} can be written as
\begin{eqnarray*}
\lefteqn{\sum_{i=1}^n \EE\left[\Bar{x_i} \int_0^{\Bar{x_i}} f'_z(\overline{W}^{(i)}+t) dt\right] }\\
&=& \sum_{i=1}^n \EE\int_{-\infty}^1 f'_z(\overline{W}^{(i)}+t) \Bar{x_i} \mathbbm{1}_{0 \leq t \leq \Bar{x_i}} dt - \sum_{i=1}^n \EE\int_{-\infty}^1 f'_z(\overline{W}^{(i)}+t) \Bar{x_i} \mathbbm{1}_{\Bar{x_i}\leq t<0}dt\\
&= &\sum_{i=1}^n \int_{-\infty}^1 \EE[f'_z(\overline{W}^{(i)}+t)] \; \Bar{K_i}(t) dt
\end{eqnarray*}
where the last equality follows from independence. Therefore, 
\begin{equation}
\EE[\overline{W}f_z(\overline{W})] = \sum_{i=1}^n \int_{-\infty}^1 \EE[f'_z(\overline{W}^{(i)}+t)] \; \Bar{K_i}(t) dt + \sum_{i=1}^n \EE\Bar{x_i} \EE [f_z(\overline{W}^{(i)})]. \label{5steineqn}
\end{equation}
Next we replace $w$ by $\overline{W}$ and take expectations in the Stein Equation \eqref{3steineqn}, 
together with \eqref{5kfunction2} and \eqref{5steineqn},  to obtain
\begin{align}
\PP(\overline{W}\leq z)-\Phi(z)&= \EE[f'_z(\overline{W})] - \EE[\overline{W}f_z(\overline{W})] \nonumber \\
&=\EE[f'_z(\overline{W})]\left(\sum_{i=1}^n \int_{-\infty}^1 \Bar{K_i}(t) dt+\sum_{i=1}^n \EE[{\xi_i}^2 \mathbbm{1}_{\xi_i>1}]\right) \nonumber \\
&= \sum_{i=1}^n \EE[{\xi_i}^2 \mathbbm{1}_{\xi_i>1}] E[f'_z(\overline{W})] \label{5R1}\\
&\;\;\;\;\;+ \sum_{i=1}^n \int_{-\infty}^1 \EE[f'_z(\overline{W}^{(i)}+\Bar{x_i}) - f'_z(\overline{W}^{(i)}+t)] \Bar{K_i}(t) dt \label{5R2}\\
&\;\;\;\;\;+ \sum_{i=1}^n \EE[\xi_i \mathbbm{1}_{\xi_i> 1}] E[ f_z(\overline{W}^{(i)})] \label{5R3}\\
&= R_1 + R_2 + R_3. \nonumber
\end{align}

In order to prove \eqref{5e-z/2}, we bound each of $R_1$ given in \eqref{5R1}, $R_2$ given in \eqref{5R2} and $R_3$ given in \eqref{5R3} and show that the sum of the three bounds is less than or equal to $7115e^{-\frac{z}{2}}(\beta_2+\beta_3)$ for $z>2$.

\medskip
{\it Bound for $R_1$} 

For $R_1= \sum_{i=1}^n \EE[{\xi_i}^2 \mathbbm{1}_{\xi_i>1}] \EE[f'_z(\overline{W})]$, substituting \eqref{3steinsoln} into the Stein Equation \eqref{3steineqn} gives
\begin{align*}
f_z'(w)&=w f_z(w) + \mathbbm{1}_{w \leq z}-\Phi(z)    \\
&=\begin{cases}
(\sqrt{2\pi}we^{w^2/2}\Phi(w)+1)(1-\Phi(z)) 
& \mbox{if} \;w \leq z;\\
(\sqrt{2\pi}we^{w^2/2}(1-\Phi(w))-1)\Phi(z)  & \mbox{if}\; w>z.
\end{cases}
\end{align*}
Using \eqref{3lemma43},
\begin{align*}
\EE|f'_z(\overline{W})|&= \EE[|f'_z(\overline{W})|\mathbbm{1}_{\overline{W}\leq  \frac{z}{2}}] + \EE[|f'_z(\overline{W})|\mathbbm{1}_{\overline{W}>\frac{z}{2}}]\\
&= \EE[(\sqrt{2\pi}we^{w^2/2}\Phi(w)+1)(1-\Phi(z))\mathbbm{1}_{\overline{W}\leq \frac{z}{2}})]+ \EE[|f'_z(\overline{W})| \mathbbm{1}_{\overline{W} >\frac{z}{2}}]\\
&\leq
\left(\sqrt{2\pi}\; \frac{z}{2} e^{z^2/8}+1\right)(1-\Phi(z)) + \PP\left(\overline{W} >\frac{z}{2}\right).
\end{align*}
By Markov's inequality, 
$
\PP\left(\overline{W} >\frac{z}{2}\right)=\PP \left(e^{\overline{W}} >e^{\frac{z}{2}}\right) \leq e^{-\frac{z}{2}}\EE[e^{\overline{W}}].  $
By definition $\Bar{x_i}\leq 1$, so $\EE\Bar{x_i}\leq 0$ and $\sum_{i=1}^n \EE{\Bar{x_i}}^2\leq 1$. Applying Lemma 8.2 in \cite{ChGoSh} with $\alpha=B=t=1$ gives $$\EE[e^{\overline{W}}]\leq \exp(e-1-1)=e^{e-2}.$$
Again employing the standard normal tail bound \eqref{4normaltail}, 
\begin{align*}
\EE|f'_z(\overline{W})| 
&\le \frac{1}{2}e^{-\frac{3}{8}z^2} + \frac{1}{z\sqrt{2\pi}}e^{-\frac{z^2}{2}} + e^{-\frac{z}{2}}e^{e-2}\\
&\le \frac{1}{2}e^{-\frac{1}{2}}e^{-\frac{z}{2}}  +
\frac{e^{-1}}{2\sqrt{2\pi}} e^{-\frac{z}{2}}
 + e^{-\frac{z}{2}}e^{e-2}.
\end{align*}
Hence, we have shown that 
\begin{align}
|R_1| 
&\leq \left(\frac{1}{2}e^{-\frac{1}{2}}+\frac{e^{-1}}{2\sqrt{2\pi}}+e^{e-2}\right) e^{-\frac{z}{2}}  \sum_{i=1}^n \EE[{\xi_i}^2 \mathbbm{1}_{\xi_i>1}]\nonumber\\
&\leq \left( \frac{1}{2}e^{-\frac{1}{2}}+\frac{e^{-1}}{2\sqrt{2\pi}}+e^{e-2}\right) e^{-\frac{z}{2}} (\beta_2+\beta_3). \label{5boundR1}
\end{align}

\medskip 
{\it Bound for $R_2$}

For $R_2= \sum_{i=1}^n \int_{-\infty}^1  \EE[f'_z(\overline{W}^{(i)}+\Bar{x_i}) - f'_z(\overline{W}^{(i)}+t)] \Bar{K_i}(t) dt$, we use the Stein Equation \eqref{3steineqn} to write $R_2$ as the sum of two quantities, and then bound them separately;

\begin{align}
R_2 &= \sum_{i=1}^n \int_{-\infty}^1  \EE[(\overline{W}^{(i)}+\Bar{x_i}) f_z(\overline{W}^{(i)}+\Bar{x_i}) + \mathbbm{1}_{\overline{W}^{(i)}+\Bar{x_i} \leq z} - \Phi(z) \nonumber\\
&\;\;\;\;\;- (\overline{W}^{(i)}+t)f_z(\overline{W}^{(i)}+t) - \mathbbm{1}_{\overline{W}^{(i)}+t \leq z}+\Phi(z) ] \Bar{K_i}(t) dt \nonumber\\
&= R_{21}+R_{22} \label{5R21+R22}
\end{align}
with 
\begin{align*}
R_{21} &= \sum_{i=1}^n \int_{-\infty}^1  \EE[\mathbbm{1}_{\overline{W}^{(i)}+\Bar{x_i} \leq z}- \mathbbm{1}_{\overline{W}^{(i)}+t \leq z}]\Bar{K_i}(t) dt ; \\
R_{22} &= \sum_{i=1}^n \int_{-\infty}^1  \EE[(\overline{W}^{(i)}+\Bar{x_i}) f_z(\overline{W}^{(i)}+\Bar{x_i})-(\overline{W}^{(i)}+t)f_z(\overline{W}^{(i)}+t)]\Bar{K_i}(t) dt.
\end{align*} 

Since the difference between two indicator functions is always less than or equal to one, $R_{21}$ can be bounded by
\begin{align*}
R_{21} 
&\leq \sum_{i=1}^n \int_{-\infty}^1 \EE[\mathbbm{1}_{\Bar{x_i} \leq t} P(z-t<\overline{W}^{(i)} \leq z-\Bar{x_i}|\Bar{x_i})] \Bar{K_i}(t) dt.
\end{align*}
Applying Proposition 8.1 from \cite{ChGoSh}  with $a=z-t$ and $b=z-\Bar{x_i}$ gives
\begin{align}
R_{21} &\leq \sum_{i=1}^n \int_{-\infty}^1 \EE[6(\min(1, t- \Bar{x_i})+\beta_2+\beta_3)e^{-\frac{z-t}{2}}] \Bar{K_i}(t) dt\nonumber\\
&\leq
6e
^{-\frac{z}{2}}e^{\frac{1}{2}}  \sum_{i=1}^n \int_{-\infty}^1 \EE[\min(1, |t|+|\Bar{x_i}|)+\beta_2+\beta_3] \Bar{K_i}(t) dt \nonumber \\
&\le 6e
^{-\frac{z}{2}}e^{\frac{1}{2}} (\beta_2 + \beta_3) + 
6e
^{-\frac{z}{2}}e^{\frac{1}{2}}  \sum_{i=1}^n \int_{-\infty}^1 \EE[\min(1, |t|+|\Bar{x_i}|)] \Bar{K_i}(t) dt ,
\label{5R211}
\end{align}
where we used 
\eqref{5kfunction2} for the last step. 
Note that $\mathbbm{1}_{0 \leq t\leq \Bar{x_i}}+\mathbbm{1}_{\Bar{x_i} \leq t<0} \leq \mathbbm{1}_{|t|\leq |\Bar{x_i}|}$, so $\Bar{K_i}(t) \leq \EE[|\Bar{x_i}| \mathbbm{1}_{|t|\leq |\Bar{x_i}|}]$. Moreover, as both $\min(1, |t|+|\Bar{x_i}|)$ and $|\Bar{x_i}| \mathbbm{1}_{|t|\leq |\Bar{x_i}|}$ are increasing functions of $|\Bar{x_i}|$, they are positively correlated. So
\begin{align*}
\EE[\min(1, |t|+|\Bar{x_i}|)] \Bar{K_i}(t) &\leq  \EE[\min(1, |t|+|\Bar{x_i}|)]  \EE[|\Bar{x_i}| \mathbbm{1}_{|t|\leq |\Bar{x_i}|}]\\
&\leq \EE[\min(1, |t|+|\Bar{x_i}|) |\Bar{x_i}| \mathbbm{1}_{|t|\leq |\Bar{x_i}|}] \\
&\leq 2\EE[\min(1, |\Bar{x_i}|) |\Bar{x_i}| \mathbbm{1}_{|t|\leq |\Bar{x_i}|}].
\end{align*}
This gives 
\begin{align}
\lefteqn{
\sum_{i=1}^n \int_{-\infty}^1 \EE[\min(1, |t|+|\Bar{x_i}|)] \Bar{K_i}(t) dt } \nonumber \\ &\leq \sum_{i=1}^n \int_{-\infty}^1 2\EE[\min(1, |\Bar{x_i}|) |\Bar{x_i}| \mathbbm{1}_{|t|\leq |\Bar{x_i}|}] dt  \\
&\leq 4\sum_{i=1}^n \EE[\min(1, |\Bar{\xi_i}|) |\Bar{\xi_i}|^2] 
= 4(\beta_2+\beta_3). \label{5R213}
\end{align} 
Substituting \eqref{5R213} into \eqref{5R211},
\begin{align}
R_{21}\leq 6e^{-\frac{z}{2}}e^{\frac{1}{2}} (4+1) (\beta_2+\beta_3) = 30e^{\frac{1}{2}}(\beta_2+\beta_3)e^{-\frac{z}{2}}. \label{5R21upperbound}
\end{align}
Similarly, we can construct a lower bound for $R_{21}$ by symmetry,
\begin{align}
R_{21} 
&\geq \sum_{i=1}^n \int_{-\infty}^1 \EE[-\mathbbm{1}_{t \leq \Bar{x_i}} P(z-\Bar{x_i}<\overline{W}^{(i)} \leq z-t |\Bar{x_i})] \Bar{K_i}(t) dt \nonumber\\
&\geq - \sum_{i=1}^n \int_{-\infty}^1 \EE[6(\min(1, \Bar{x_i}-t)+\beta_2+\beta_3) e^{-\frac{z-\Bar{x_i}}{2}}] \Bar{K_i}(t) dt\nonumber\\
&\geq - 6e^{-\frac{z}{2}}e^{\frac{1}{2}}  \sum_{i=1}^n \int_{-\infty}^1 \EE[\min(1, |t|+|\Bar{x_i}|)+\beta_2+\beta_3] \Bar{K_i}(t) dt. \label{5R21lowerbound}
\end{align}
Proceeding now as for \eqref{5R213} gives that $R_{21}\geq -30e^{\frac{1}{2}}(\beta_2+\beta_3)e^{-\frac{z}{2}}$ and therefore,
\begin{equation}
    |R_{21}|\leq 30e^{\frac{1}{2}}(\beta_2+\beta_3)e^{-\frac{z}{2}}.
    \label{5R21bound}
\end{equation}

For $R_{22}$, since $wf_z(w)$ is increasing in $w$, Lemma \ref{lemma8.4explicit} gives 
\begin{align}
R_{22}
&\leq \sum_{i=1}^n \int_{-\infty}^1 \EE[\mathbbm{1}_{t\leq \Bar{x_i}}(\overline{W}^{(i)}+\Bar{x_i}) f_z(\overline{W}^{(i)}+\Bar{x_i})|\Bar{x_i} \nonumber \\
& \quad \quad \quad \quad \quad  -(\overline{W}^{(i)}+t)f_z(\overline{W}^{(i)}+t)]\Bar{K_i}(t) dt  \nonumber \\
&\leq \left(25.8+\frac{20e^{e^2-2}}{\sqrt{2\pi}}\right) e^{-\frac{z}{2}} \sum_{i=1}^n \int_{-\infty}^1 \EE[\min(1, |\Bar{x_i}|+|t|)]\Bar{K_i}(t) dt  \nonumber\\
&\leq \left(103.2+\frac{80}{\sqrt{2\pi}}e^{e^2-2}\right) e^{-\frac{z}{2}}(\beta_2+\beta_3). \label{5R221} .
\end{align}
Here we used 
 \eqref{5R213} for the last step. 
A lower bound for $R_{22}$ follows similarly,
\begin{align}
R_{22}&\geq \sum_{i=1}^n \int_{-\infty}^1  \EE[\mathbbm{1}_{\Bar{x_i}\leq t} (\overline{W}^{(i)}+\Bar{x_i}) f_z(\overline{W}^{(i)}+\Bar{x_i})|\Bar{x_i}\nonumber \\
& \quad \quad \quad \quad \quad  -(\overline{W}^{(i)}+t)f_z(\overline{W}^{(i)}+t)]\Bar{K_i}(t) dt \nonumber\\
&\geq -\sum_{i=1}^n \int_{-\infty}^1 
 \EE[\mathbbm{1}_{\Bar{x_i}\leq t} (\overline{W}^{(i)}+t)f_z(\overline{W}^{(i)}+t) \nonumber \\
& \quad \quad \quad \quad \quad -(\overline{W}^{(i)}+\Bar{x_i}) f_z(\overline{W}^{(i)}+\Bar{x_i})|\Bar{x_i}]\Bar{K_i}(t) dt \nonumber\\
&\geq -\left(103.2+\frac{80}{\sqrt{2\pi}}e^{e^2-2}\right) e^{-\frac{z}{2}}(\beta_2+\beta_3). \label{5R222}
\end{align}
Collecting \eqref{5R21bound}, \eqref{5R221} and \eqref{5R222} gives
\begin{equation}
|R_2|\leq |R_{21}|+|R_{22}|\leq \left(30e^\frac{1}{2}+103.2+\frac{80}{\sqrt{2\pi}}e^{e^2-2}\right)
e^{-\frac{z}{2}}(\beta_2+\beta_3). \label{5boundR2}
\end{equation}

\medskip
{\it Bound for $R_3$} 

Finally, for $R_3=\sum_{i=1}^n  \EE[\xi_i \mathbbm{1}_{\xi_i> 1}]  \EE[ f_z(\overline{W}^{(i)})]$, we use similar arguments as for $R_1$;
\begin{align*}
 \EE|f_z(\overline{W}^{(i)})|&=  \EE\left[|f_z(\overline{W}^{(i)})|\mathbbm{1}_{\overline{W}^{(i)}\leq \frac{z}{2}}\right] +  \EE\left[|f_z(\overline{W}^{(i)})|\mathbbm{1}_{\overline{W}^{(i)}>\frac{z}{2}}\right]\\
&\leq \sqrt{2\pi} \; e^{\frac{z^2}{8}} (1-\Phi(z)) +  \EE\left[|f_z(\overline{W}^{(i)})| \mathbbm{1}_{\overline{W}^{(i)} > \frac{z}{2}}\right].
\end{align*}
From  \eqref{3lemma45}, $0<f_z\leq \min(\frac{\sqrt{2\pi}}{4},\frac{1}{|z|}) =\frac{1}{|z|}\leq \frac{1}{2}$ for $z>2$. The standard normal tail bound \eqref{4normaltail} and Lemma $8.2$ in \cite{ChGoSh} with $\alpha=B=t=1$ give
\begin{align*}
 \EE|f_z(\overline{W}^{(i)})|&\leq \sqrt{2\pi}e^{\frac{z^2}{8}}\frac{1}{z\sqrt{2\pi}} e^{-\frac{z^2}{2}} + \frac{1}{2} P\left(\overline{W}^{(i)} >\frac{z}{2}\right)  \\
&\leq \frac{1}{z}e^{-\frac{3}{8}z^2} + \frac{1}{2} e^{-\frac{z}{2}}e^{e-2}\\
&\leq \frac{1}{2} e^{-\frac{1}{2}} e^{-\frac{z}{2}}+ \frac{1}{2} e^{-\frac{z}{2}}e^{e-2} .
\end{align*}
Hence, we have shown that
\begin{align}
|R_3
&\leq \frac{1}{2}(e^{-\frac{1}{2}}+e^{e-2})e^{-\frac{z}{2}} \sum_{i=1}^n\EE[\xi_i \mathbbm{1}_{\xi_i> 1}] \nonumber\\
&\leq \frac{1}{2}(e^{-\frac{1}{2}}+e^{e-2}) e^{-\frac{z}{2}}(\beta_2+\beta_3).\label{5boundR3}
\end{align}
Applying \eqref{5boundR1}, \eqref{5boundR2} and \eqref{5boundR3} to \eqref{5R1}, \eqref{5R2} and \eqref{5R3} respectively, we have
\begin{align}
\lefteqn{
|P(\overline{W}\leq z)-\Phi(z)|} \nonumber \\
&\leq \left(31e^{-\frac{1}{2}}+\frac{3}{2}e^{e-2}+103.2+\frac{0.5e^{-1} +80e^{e^2-2}} {\sqrt{2\pi}}\right)e^{-\frac{z}{2}}(\beta_2+\beta_3) \nonumber\\
&\leq 7115e^{-\frac{z}{2}}(\beta_2+\beta_3) \label{5e-z/22}.
\end{align}
This completes the proof of \eqref{5e-z/2} and therefore the proof of \eqref{5z>2bound}. 
Thus we have proved the theorem when $\beta_2+\beta_3<1$ and $z>2$.
Hence we have proved Theorem \ref{theorem5.1}. 
\end{proof}

\appendix

\section*{Appendix: Results from \cite{ChGoSh}}

For convenience here we give some results from \cite{ChGoSh} which we use in this paper.

Let $\xi_1,...,\xi_n$ denote independent random variables with zero means and variances summing to one. Let $W$ denote their sum, $W=\sum_{i=1}^n \xi_i$. We consider the truncated random variables and their sums
\begin{equation}
\Bar{x_i}=\xi_i\mathbbm{1}({\xi_i\leq 1}), \;\; \overline{W}=\sum_{i=1}^n \Bar{x_i}, \;\; \mbox{and} \;\; \overline{W}^{(i)}=\overline{W}-\Bar{x_i}. \label{2notation}
\end{equation}

In \cite{ChGoSh}, Proposition 8.1, it is shown that for all real $a<b$ and $i=1,...,n$, 
\begin{equation*}\label{prop81} 
   \PP(a \leq \overline{W}^{(i)} \leq b) \leq 6(\min(1,b-a)+ \beta_2 +\beta_3)e^{-\frac{a}{2}} .
\end{equation*}
Moreover, Lemmas 8.1 and 8.2 in \cite{ChGoSh} gives the next result.

\begin{lem}\label{8lemmas}[Lemmas 8.1 and 8.2] 
Let $\eta_1,...,\eta_n$ be independent random variables satisfying $E\eta_i \leq0$ for $1\leq i \leq n$ and $\sum_{i=1}^n E\eta_i^2 \leq B^2$. Then for $x>0$ and $p\geq 1$, with $S_n=\sum_{i=1}^n \eta_i$,
\begin{equation*}\label{lemma8.1} 
\PP(S_n \geq x) \leq P\left(\max_{1\leq i\leq n}\eta_i > \frac{x\vee B}{p}\right) + e^p\left(1+\frac{x^2}{pB^2}\right) ^{-p} .
\end{equation*}
If moreover, for some $\alpha>0$,  $\eta_i \leq \alpha$ for all $1\leq i\leq n$, then, for  $t>0$, 
\begin{equation*}\label{lemma8.2}
    \EE e^{tS_n} \leq \exp(\alpha^{-2}(e^{t\alpha}-1-t\alpha) B^2).
\end{equation*}
\end{lem}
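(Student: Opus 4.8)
\medskip
\noindent\textbf{Exponential moment bound.} I would prove the two inequalities in turn, starting with the exponential moment bound since it drives the maximal inequality. By independence, $\EE e^{tS_n}=\prod_{i=1}^n\EE e^{t\eta_i}$, so it suffices to bound each factor and multiply. The elementary fact to invoke is that, for fixed $t>0$, the function $x\mapsto (e^{tx}-1-tx)/x^{2}$ (extended by $t^{2}/2$ at $x=0$) is nondecreasing on $\RR$; this is checked by differentiating and locating the sign change of the numerator via two further derivatives. Hence $e^{tx}\le 1+tx+\alpha^{-2}(e^{t\alpha}-1-t\alpha)x^{2}$ for every $x\le\alpha$. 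Substituting $x=\eta_i$ (legitimate as $\eta_i\le\alpha$), taking expectations, and using $\EE\eta_i\le 0$ and $e^{t\alpha}-1-t\alpha\ge 0$ gives
\begin{equation*}
\EE e^{t\eta_i}\le 1+\alpha^{-2}(e^{t\alpha}-1-t\alpha)\,\EE\eta_i^{2}\le\exp\!\big(\alpha^{-2}(e^{t\alpha}-1-t\alpha)\,\EE\eta_i^{2}\big),
\end{equation*}
the last step being $1+u\le e^{u}$. Multiplying over $i$ and using $\sum_{i}\EE\eta_i^{2}\le B^{2}$ (with $e^{t\alpha}-1-t\alpha\ge 0$) yields the second assertion.

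\smallskip
\noindent\textbf{Maximal inequality.} Put $y=(x\vee B)/p$ and truncate at level $y$: $\bar\eta_i=\eta_i\mathbbm{1}(\eta_i\le y)$, $\bar S_n=\sum_{i}\bar\eta_i$. On $\{\max_{1\le i\le n}\eta_i\le y\}$ one has $S_n=\bar S_n$, whence $\{S_n\ge x\}\subseteq\{\max_{1\le i\le n}\eta_i>y\}\cup\{\bar S_n\ge x\}$ and $\PP(S_n\ge x)\le\PP(\max_{1\le i\le n}\eta_i>y)+\PP(\bar S_n\ge x)$. The truncated variables inherit the hypotheses: $\EE\bar\eta_i\le\EE\eta_i\le 0$ since $\eta_i\mathbbm{1}(\eta_i>y)\ge 0$, $\sum_{i}\EE\bar\eta_i^{2}\le\sum_{i}\EE\eta_i^{2}\le B^{2}$, and $\bar\eta_i\le y$. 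By Markov's inequality and the exponential moment bound with $\alpha=y$, for every $t>0$,
\begin{equation*}
\PP(\bar S_n\ge x)\le e^{-tx}\,\EE e^{t\bar S_n}\le\exp\!\Big(-tx+y^{-2}(e^{ty}-1-ty)B^{2}\Big).
\end{equation*}
Minimising the exponent over $t>0$: with $m=x/y$, $s=B^{2}/y^{2}$, $u=ty$, the minimiser solves $e^{u}=1+m/s=1+xy/B^{2}$, and the optimal exponent equals $m-(m+s)\log(1+m/s)$. The final step is the algebraic check that this is at most $p-p\log(1+x^{2}/(pB^{2}))$ in both regimes: for $x\ge B$, $m=p$ and $s=B^{2}p^{2}/x^{2}$, and the claim follows from $m+s\ge p$ and $\log(1+x^{2}/(pB^{2}))\ge 0$; for $x<B$, $m=xp/B$ and $s=p^{2}$, and it follows from $\log(1+v)\ge v/(1+v)$, $\log(1+w)\le w$, and $x\le Bp$ (true since $p\ge 1$). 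This gives $\PP(\bar S_n\ge x)\le e^{p}(1+x^{2}/(pB^{2}))^{-p}$; combining with the truncation bound completes the proof.

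\smallskip
\noindent\textbf{Main obstacle.} The Taylor-remainder inequality and the truncation step are routine; the delicate point is the Bennett-type Chernoff optimisation and, above all, reducing the optimised exponent to the clean form $e^{p}(1+x^{2}/(pB^{2}))^{-p}$ — particularly when $x<B$, where the truncation level $y=B/p$ is out of scale with the $x$ appearing in $x^{2}/(pB^{2})$, so $p\ge 1$ must be used to absorb the mismatch.
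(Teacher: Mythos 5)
This statement is quoted in the paper's appendix from Chen--Goldstein--Shao (Lemmas 8.1 and 8.2 of \cite{ChGoSh}); the paper itself gives no proof, only the citation. Your argument is a correct, self-contained reconstruction of the standard proof from that reference: the Bennett-type exponential bound via the monotonicity of $x\mapsto (e^{tx}-1-tx)/x^{2}$ together with $\EE\eta_i\le 0$ and $1+u\le e^{u}$; then truncation at $y=(x\vee B)/p$, the inclusion $\{S_n\ge x\}\subseteq\{\max_i\eta_i>y\}\cup\{\bar S_n\ge x\}$, and a Chernoff bound with the optimal $t$ solving $e^{ty}=1+xy/B^{2}$, giving the exponent $m-(m+s)\log(1+m/s)$ with $m=x/y$, $s=B^{2}/y^{2}$. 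Your two-regime check is sound: for $x\ge B$ one has $m=p$ and $m/s=x^{2}/(pB^{2})$, so dropping the nonnegative term $s\log(1+m/s)$ gives exactly $p-p\log(1+x^{2}/(pB^{2}))$; for $x<B$ the inequality $\log(1+v)\ge v/(1+v)$ shows the optimized exponent is $\le 0$, while the target exponent $p\bigl(1-\log(1+x^{2}/(pB^{2}))\bigr)$ is $\ge 0$ because the case hypothesis $x<B$ gives $x^{2}/(pB^{2})<1/p\le 1$ (your invocation of ``$x\le Bp$'' is slightly imprecise --- what is needed is $x\le B\sqrt{p}$ --- but the case hypothesis already supplies it, so nothing breaks). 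Since the paper outsources this lemma entirely, your write-up adds a complete proof where the paper offers none, at the cost of the Chernoff optimisation and case analysis that the citation hides.
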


Lemma 8.3 in \cite{ChGoSh} with $t=1$ gives the next result.

\begin{lem} \label{8lemmas}[Lemmas 8.3] 
Let $\xi_1,...,\xi_n$ be independent random variables with zero means and variances summing to one. Let $W=\sum_{i=1}^n \xi_i$ and $\beta_2$ be given as above. Then for $z \geq 2$ and $p \geq 2$, 
\begin{equation*}
\PP (W \geq z, \max_{1\leq i\leq n}\xi_i >1) \leq 2\sum_{i=1}^n \PP \left(|\xi_i|>\frac{z}{2p}\right) + e^p\left(1+\frac{z^2}{4p}\right)^{-p}\beta_2.
\end{equation*}
\end{lem}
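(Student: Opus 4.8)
\medskip
\noindent\textbf{Proof proposal for the preceding lemma.}
The plan is to decompose the event $\{W \ge z,\ \max_{1\le i\le n}\xi_i > 1\}$ according to which coordinate realises a value larger than $1$, and then to split that coordinate once more according to whether it also exceeds the coarser threshold $z/(2p)$. (This is Lemma 8.3 of \cite{ChGoSh} specialised to truncation level $t=1$, and the argument below mirrors the one there.)

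First I would use the identity $\{W \ge z,\ \max_{1\le i\le n}\xi_i > 1\} = \bigcup_{j=1}^n \{W \ge z,\ \xi_j > 1\}$ together with a union bound, which reduces the task to estimating $\PP(W \ge z,\ \xi_j > 1)$ for each fixed $j$. Writing $W = \xi_j + W^{(j)}$ with $W^{(j)} = \sum_{i\ne j}\xi_i$, I would distinguish two sub-cases. On $\{\xi_j > z/(2p)\}$ the probability is at most $\PP(|\xi_j| > z/(2p))$. On $\{1 < \xi_j \le z/(2p)\}$, the constraints $W \ge z$ and $\xi_j \le z/(2p)$ together with $p \ge 2$ force $W^{(j)} \ge z - z/(2p) \ge z/2$; since $\xi_j$ and $W^{(j)}$ depend on disjoint coordinates, independence gives $\PP\big(W \ge z,\ 1 < \xi_j \le z/(2p)\big) \le \PP(\xi_j > 1)\,\PP(W^{(j)} \ge z/2)$.

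Next I would bound the two factors. Chebyshev's inequality applied to $\xi_j^2\mathbbm{1}(|\xi_j| > 1)$ gives $\PP(\xi_j > 1) \le \EE[\xi_j^2\mathbbm{1}(|\xi_j| > 1)]$, whose sum over $j$ equals $\beta_2$ and is therefore at most $\sum_i\EE\xi_i^2 = 1$. For $\PP(W^{(j)} \ge z/2)$, note that $W^{(j)}$ is a sum of independent mean-zero random variables with total variance at most $1$, so Lemma 8.1 of \cite{ChGoSh} applied with $x = z/2$, $B = 1$ and the given $p$ — using $z \ge 2$ so that $(z/2)\vee 1 = z/2$ and the threshold $((z/2)\vee 1)/p$ equals $z/(2p)$ — yields $\PP(W^{(j)} \ge z/2) \le \PP\big(\max_{i\ne j}\xi_i > z/(2p)\big) + e^p(1 + z^2/(4p))^{-p} \le \sum_{i\ne j}\PP(|\xi_i| > z/(2p)) + e^p(1 + z^2/(4p))^{-p}$.

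Finally I would sum the resulting bound for $\PP(W \ge z,\ \xi_j > 1)$ over $j = 1,\dots,n$. The large-jump terms contribute $\sum_j\PP(|\xi_j| > z/(2p)) = \sum_i\PP(|\xi_i| > z/(2p))$; the cross terms contribute $\sum_j\PP(\xi_j > 1)\sum_{i\ne j}\PP(|\xi_i| > z/(2p)) \le \beta_2\sum_i\PP(|\xi_i| > z/(2p)) \le \sum_i\PP(|\xi_i| > z/(2p))$, using $\beta_2 \le 1$; and the exponential terms contribute $\sum_j\PP(\xi_j > 1)\,e^p(1 + z^2/(4p))^{-p} \le \beta_2\,e^p(1 + z^2/(4p))^{-p}$. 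Adding the first two groups produces $2\sum_i\PP(|\xi_i| > z/(2p))$, which together with the third group is exactly the asserted inequality. I expect the main (and essentially only) subtlety to lie in this last step: the truncation-mass weights $\PP(\xi_j > 1)$ must be pulled out \emph{before} summing over $j$, so that they aggregate to $\beta_2$ — rather than to $n$ or to $1$ — in front of the exponential term, and the inequality $\beta_2 \le 1$ must be invoked to absorb the cross term into a single extra copy of $\sum_i\PP(|\xi_i| > z/(2p))$ instead of degrading the constant $2$ in the first term.
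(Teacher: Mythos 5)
Your proposal is correct: the union bound over the index of the large coordinate, the split of $\xi_j$ at $z/(2p)$, the use of independence of $\xi_j$ and $W^{(j)}$, the application of Lemma 8.1 with $B=1$ and $x=z/2$ (where $z\ge 2$ gives $(z/2)\vee 1=z/2$ and $p\ge 2$ gives $z-z/(2p)\ge z/2$), and the aggregation of the weights $\PP(\xi_j>1)\le\EE[\xi_j^2\mathbbm{1}(|\xi_j|>1)]$ into $\beta_2\le 1$ all go through and yield exactly the stated bound. The paper does not prove this lemma itself but quotes it as Lemma 8.3 of \cite{ChGoSh} with $t=1$, and your argument is essentially the standard proof given there, so there is nothing further to reconcile.
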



\bigskip
{\bf Acknowledgement.} 
The authors would like to thank Aihua Xia (Melbourne) for helpful discussion.

%
%
%
%

\end{document}